\def \RR {\mathbb R}
\def \NN {\mathbb N}
\def \EE {\mathbb E}
\def \PP {\mathbb P}
\def \vphi {\varphi}
\def \cF {\mathcal F}
\def \cC { S }
\def \cI {\mathcal I}
\newtheorem{theorem}{Theorem}[section]
\newtheorem{lemma}[theorem]{Lemma}
\newtheorem{proposition}[theorem]{Proposition}
\newtheorem{corollary}[theorem]{Corollary}
\newtheorem{remark}[theorem]{Remark}
\newtheorem{definition}[theorem]{Definition}
\def\myffrac#1#2 in #3{\raise 2.6pt\hbox{$#3 #1$}\mkern-1.5mu\raise 0.8pt\hbox{$
#3/$}\mkern-1.1mu\lower 1.5pt\hbox{$#3 #2$}}
\def\qed{\hfill $\vcenter{\hrule height .3mm
\hbox {\vrule width .3mm height 2.1mm \kern 2mm \vrule width .3mm
height 2.1mm} \hrule height .3mm}$ \bigskip}
\title{An example related to the slicing inequality for general measures}
\author{Bo'az Klartag and Alexander Koldobsky}
\date{}
\begin{document}
\maketitle

\begin{abstract} For $n\in \NN,$ let $\cC_n$ be the smallest number $\cC>0$ satisfying the inequality
$$
\int_K f \le \cC \cdot |K|^{\frac 1n} \cdot \max_{\xi\in S^{n-1}} \int_{K\cap \xi^\bot} f
$$
for all centrally-symmetric convex bodies $K$ in $\RR^n$ and all even, continuous probability densities $f$ on $K$.
Here $|K|$ is the volume of $K$.
It was proved in \cite{Koldobsky-2014} that $\cC_n\le 2\sqrt{n}$, and in analogy with Bourgain's slicing problem,
it was asked whether $\cC_n$ is bounded from above by a universal constant.
In this note we construct an example showing that $\cC_n\ge c\sqrt{n}/\sqrt{\log \log n},$ where $c > 0$ is an absolute constant.
Additionally, for any $0 < \alpha < 2$ we describe a related example that satisfies the so-called $\psi_{\alpha}$-condition.
\end{abstract}

\section{Introduction}

Suppose that $K \subseteq \RR^n$ is a centrally-symmetric convex set of volume one (i.e., $K = -K$).
Given an arbitrary continuous probability density $f: K \rightarrow \RR$, can we find a hyperplane $H \subseteq \RR^n$ passing through
the origin such that
$$ \int_{H \cap K} f \geq c $$
where $c > 0$ is a universal constant, which is in particular independent of $K, f$ and even the dimension $n$?

\smallbreak
For many classes of convex bodies, the answer is surprisingly positive. It was proven by the second-named author \cite{Koldobsky-2015}
that the answer is affirmative in the case where $K \subseteq \RR^n$ is {\it unconditional}, i.e.,
$$ (x_1,\ldots,x_n) \in K \quad \Longleftrightarrow \quad (|x_1|,\ldots,|x_n|) \in K \qquad \qquad \text{for all} \ x = (x_1,\ldots,x_n) \in \RR^n. $$
This generalizes a result first proven by Bourgain \cite{Bou1}, who considered the case where the density $f$ is constant.
 Bourgain's investigations have led to the formulation of the slicing problem \cite{Bou1,Bou2}, which
asks whether $\sup_n L_n < \infty$, where $L_n > 0$ is the minimal number $L$ such that
for any centrally-symmetric convex body $K \subseteq \RR^n$,
$$ |K|_n \leq L \cdot \max_{\xi \in S^{n-1}} |K \cap \xi^{\perp}|_{n-1} \cdot |K|_n^{1/n}. $$
Here $\xi^\bot$ is the central hyperplane perpendicular to the vector $\xi \in S^{n-1}$, and $S^{n-1} = \{ x \in \RR^n \, ; \, |x| = 1 \}$
is the Euclidean unit sphere centered at the origin.
We write $|K|_{n}$ for the $n$-dimensional volume of $K$. When the dimension is clear from the context,
we will simply use $|K|$ in place of $|K|_n$.
Bourgain's slicing problem is still unsolved, the best-to-date estimate $L_n\le C n^{1/4}$ was established by the first-named author \cite{Klartag-2006},
removing a logarithmic term from an earlier estimate by Bourgain \cite{Bourgain-1991}. In analogy with the slicing problem,
for $n \geq 1$ let $\cC_n$ be the smallest number  $\cC > 0$ satisfying the inequality
\begin{equation}\label{hyperplane}
\mu(K) \le \cC \cdot \max_{\xi\in S^{n-1}} \mu^+(K\cap \xi^\bot) \cdot |K|^{\frac 1n}
\end{equation}
for all centrally-symmetric convex bodies $K \subseteq \RR^n$,
and all measures $\mu$ with a non-negative continuous density $f$ in $\RR^n$.
Here we abbreviate $$ \mu^+(K\cap\xi^\bot)=\int_{K\cap \xi^\bot} f $$ where the restriction of the density $f$ to $\xi^\bot$ is integrated
with respect to the Lebesgue measure in $\xi^\bot.$

\smallbreak
Many of the positive results
towards the slicing problem may be generalized from the case of the uniform measure on a convex domain $K$
to the broader class of any continuous probability density on $K$. Thus (\ref{hyperplane})
holds true, with $\cC$ having the order of magnitude of a universal constant,
whenever $K$ is the polar to a convex body with bounded volume ratio (see \cite{Koldobsky-2015}) or the unit
ball of a subspace of $L_p$ with $p>2$ (see \cite{KP}). The first result of this kind was proved in \cite{Koldobsky-2012}: If $K$ belongs to the class of intersection bodies $\cI_n$ (see definition in Section \ref{intbodies}), then (\ref{hyperplane})
holds with $\cC=2$ for all measures with even continuous densities.

\smallbreak In view of the positive results mentioned above,
one could think that perhaps $\sup_n \cC_n < \infty$. In this note
we show that this is not the case, and prove the following:

\begin{theorem} \label{sqrtn} There exist universal constants $c,C >0$ so that for any $n \geq 3$,
$$\frac{c\sqrt{n}}{\sqrt{\log \log n}}\le \cC_n \le C \sqrt{n}.$$
\label{thm_530}
\end{theorem}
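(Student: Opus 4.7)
The upper bound $\cC_n \le C\sqrt{n}$ is a restatement of the cited result $\cC_n \le 2\sqrt{n}$ from \cite{Koldobsky-2014}.

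For the lower bound, the task is to construct, for each $n$, a centrally symmetric convex body $K_n \subseteq \RR^n$ and an even continuous probability density $f_n$ such that
\[
\int_{K_n} f_n \;\geq\; \frac{c\sqrt{n}}{\sqrt{\log\log n}} \cdot |K_n|^{1/n} \cdot \max_{\xi\in S^{n-1}} \int_{K_n\cap \xi^\bot} f_n.
\]
Equivalently, after normalizing $\int_{K_n} f_n = 1$, one needs $|K_n|^{1/n}\cdot\max_\xi\int_{K_n\cap\xi^\bot}f_n \lesssim \sqrt{\log\log n}/\sqrt{n}$. The intuition I would follow is to push most of the savings into the volume factor $|K_n|^{1/n}$, which can be made as small as $\sqrt{\log\log n}/\sqrt{n}$ for bodies with relatively few vertices, while keeping the maximum hyperplane section integral of constant order after rescaling.

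A natural candidate is a random-polytope-plus-mollification construction: fix $N \asymp n\log n$, pick vectors $v_1,\dots,v_N \in S^{n-1}$ (for instance iid uniform on the sphere), take $K_n = \Conv(\pm v_1,\dots,\pm v_N)+\epsilon B_2^n$ for a small thickening $\epsilon$, and let $f_n$ be an even mollification of the normalized counting measure on $\{\pm v_i\}_i$ by a bump of scale $\epsilon$. Gluskin-type estimates for random polytopes yield $|K_n|^{1/n}\asymp\sqrt{\log\log n}/\sqrt{n}$, and the parameter $\epsilon$ is chosen so that the bumps lie essentially inside $K_n$, giving $\int_{K_n} f_n \gtrsim 1$ by direct computation.

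The main obstacle is bounding $\max_{\xi\in S^{n-1}}\int_{K_n\cap\xi^\bot} f_n$ uniformly in $\xi$. Since this quantity equals the density at $0$ of the marginal $\langle X,\xi\rangle$ for $X$ drawn from $f_n$, and for our choice of $f_n$ this marginal is essentially a mixture of one-dimensional bumps centered at $\pm\langle v_i,\xi\rangle$, the task is to show that no direction $\xi$ is simultaneously nearly perpendicular to too many of the $v_i$. This is handled by a union bound over a $\delta$-net of $S^{n-1}$ combined with a concentration estimate for the random sum $\frac{1}{N}\sum_i e^{-\langle v_i,\xi\rangle^2/(2\epsilon^2)}$; the $\sqrt{\log\log n}$ loss enters at this step, coupled to the volume estimate. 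The $\psi_\alpha$ variant announced in the abstract is then obtained by replacing the Gaussian mollifier by a bump with $\psi_\alpha$ tails and pushing essentially the same estimates through with minor modifications.
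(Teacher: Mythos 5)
Your high-level framework (random unit vectors, a mollified counting measure, a $\delta$-net union bound) is in the same family as what the paper does, but the parameter choices and the claimed mechanism are wrong, and as written the proposal yields a bound that does \emph{not} grow with $n$. The basic obstruction is that the quantity $|K|^{1/n}\cdot\max_\xi\mu^+(K\cap\xi^\perp)/\mu(K)$ is invariant under simultaneously rescaling $K\mapsto tK$ and pushing $\mu$ forward under $x\mapsto tx$ (sections scale by $t^{-1}$, the volume radius by $t$), so "pushing the savings into the volume factor" cannot work: making $|K_n|^{1/n}$ tiny is exactly offset by the sections getting larger. Concretely, with $f_n$ a mixture of width-$\epsilon$ Gaussian bumps at $\pm v_i\in S^{n-1}$, the section integral is $\frac{1}{\sqrt{2\pi}\,\epsilon N}\sum_i\vphi(\langle v_i,\xi\rangle/\epsilon)$, and since $\EE\,\vphi(\langle\Theta,\xi\rangle/\epsilon)\asymp\epsilon\sqrt{n}$ for uniform $\Theta$ (this is Lemma \ref{lem_540} with $R=1/\epsilon$, $t=0$), the \emph{average} section is already of order $\sqrt{n}$, not $O(1)$. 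Combining with $|K_n|^{1/n}\asymp\sqrt{\log\log n}/\sqrt{n}$ gives only $\cC_n\gtrsim 1/\sqrt{\log\log n}$, which is off by a full factor of $\sqrt{n}$.

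The paper's construction keeps the Gaussian mollifier at unit width and instead places the atoms of the discrete measure at distance $R\sim n/\sqrt{\log\,\cdot}$ from the origin, far beyond the typical Gaussian scale $\sqrt{n}$; the section density then has no $1/\epsilon$ amplification, and the entire task is to show $\frac{1}{N}\sum_i\vphi(t+R\,\xi\cdot\Theta_i)\lesssim\sqrt{n}/R$ uniformly in $(\xi,t)$. In that setup the body $K=\Conv(\pm R_1\theta_i,\pm R_2\eta_j,\pm n e_k)$ has $|K|^{1/n}\leq C$ bounded (Lemma \ref{cor_gluskin}, via Khatri--Sidak), and the numerator $\sqrt{\log\log n}/\sqrt{n}$ lives entirely in the section bound. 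Moreover, a single $\delta$-net pass (Proposition \ref{prop_612}) only supports $R\lesssim n/\sqrt{\log n}$ before the net cardinality term takes over; the $\log\log$ is reached by iterating the net argument twice (Lemma \ref{lem_955}) with $R_1=n/\sqrt{\log n}$ and $R_2=n/\sqrt{\log\log n}$, a step your proposal does not anticipate. Finally, the $\psi_\alpha$ variant in the paper does \emph{not} replace the Gaussian mollifier by a bump with $\psi_\alpha$ tails: the mollifier is still $\gamma_n$, and one instead sets $R=n^{1-\alpha/4}$ and $N\asymp n^8 e^{8n^{\alpha/2}}$, with the $\psi_\alpha$-tails coming from the geometry of the discrete measure (Lemma \ref{lem_529}), not the mollifier.
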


The new result here is the left-hand side estimate. The right-hand side estimate was first established
in \cite{Koldobsky-2014}, and later a different
proof was found in \cite{CGL} where the central-symmetry assumption was no longer required.
In fact, the upper estimate for the constants $\cC_n$ may be deduced from the following theorem proved
in \cite[Corollary 1]{Koldobsky-2015}, which we now describe. A compact $K \subseteq \RR^n$ is star-shaped
if $t K \subseteq K$ for $0 \leq t \leq 1$,
where $t K = \{ t x \,; \, x \in K  \}$. We say that a star-shaped $K$ is a star body if its radial function
$$\rho_K(x) = \max\{a \geq 0:\ ax\in K\} \qquad \qquad (x \in S^{n-1}) $$
is continuous and positive in $S^{n-1}$. For a star body $K \subseteq \RR^n$ denote by
$$d_{\rm {ovr}}(K, \cI_n) = \inf \left\{ \left( \frac {|D|}{|K|}\right)^{1/n};\ K\subseteq D,\ D\in \cI_n \right\}$$
the outer volume ratio distance from $K$ to the class of intersection bodies $\cI_n.$
\medbreak
\begin{theorem} \label{ovr} For any $n \in \NN,$ any centrally-symmetric
star body $K \subseteq \RR^n,$ and any measure $\mu$
with a continuous density on $K$,
$$\mu(K)\le 2\ d_{\rm ovr}(K,\cI_n) \cdot \max_{\xi \in S^{n-1}} \mu^+(K\cap \xi^\bot) \cdot |K|^{1/n}.$$
\end{theorem}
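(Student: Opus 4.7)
The plan is to reduce Theorem \ref{ovr} to the already-established slicing inequality for intersection bodies from \cite{Koldobsky-2012}, which reads: for every $D \in \cI_n$ and every even continuous non-negative density $g$ on $\RR^n$,
$$
\int_D g \le 2 \cdot \max_{\xi \in S^{n-1}} \int_{D \cap \xi^\bot} g \cdot |D|^{1/n}.
$$
The strategy is to enclose $K$ in an intersection body $D$ whose volume ratio with $K$ almost realizes $d_{\rm ovr}(K,\cI_n)$, and to apply this inequality to a suitable approximation of the extension-by-zero of the given density.

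First I would reduce to the case in which $f$ is even. Because $K$ and each central section $K \cap \xi^\bot$ are centrally symmetric, replacing $f(x)$ by $\frac{1}{2}(f(x)+f(-x))$ preserves both $\mu(K)$ and every $\mu^+(K \cap \xi^\bot)$; so we may assume $f$ is even. Fixing $\varepsilon > 0$, I would then use the definition of $d_{\rm ovr}$ to choose a centrally-symmetric intersection body $D \supseteq K$ with $(|D|/|K|)^{1/n} \le d_{\rm ovr}(K, \cI_n) + \varepsilon$.

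Next, I want to plug into the intersection-body inequality a density that is essentially $f \cdot \mathbf{1}_K$; but its extension by zero is not continuous. I would first extend $f$ to a bounded, even, continuous function $\tilde f$ on $\RR^n$ (Tietze), and then form $g_m = \tilde f \cdot \eta_m$, where $\eta_m$ is an even continuous cutoff with $\mathbf{1}_K \le \eta_m \le \mathbf{1}_{K + (1/m) B}$ and $B$ denotes the Euclidean unit ball. Each $g_m$ is an even continuous non-negative density on $\RR^n$, so the inequality from \cite{Koldobsky-2012} gives
$$
\int_D g_m \le 2 \cdot \max_{\xi \in S^{n-1}} \int_{D \cap \xi^\bot} g_m \cdot |D|^{1/n}.
$$
The squeeze $f \mathbf{1}_K \le g_m \le \tilde f \cdot \mathbf{1}_{K + (1/m) B}$ combined with dominated convergence yields $\int_D g_m \to \mu(K)$, and for each fixed $\xi$ the analogous two-sided estimate forces $\int_{D \cap \xi^\bot} g_m \to \mu^+(K \cap \xi^\bot)$. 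Passing $m \to \infty$ and then $\varepsilon \to 0$ delivers the claimed inequality.

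The main technical obstacle is exchanging the maximum over $\xi \in S^{n-1}$ with the limit $m \to \infty$. To handle this I would combine the compactness of $S^{n-1}$ with the $\xi$-continuity of hyperplane integrals of continuous compactly-supported functions, reducing the issue to a standard equicontinuity argument and an upper bound of the form $\int_{D \cap \xi^\bot} g_m \le \|\tilde f\|_\infty \cdot |(K + (1/m)B) \cap \xi^\bot|$ that is uniform in $\xi$. Once this exchange is justified, the proof collapses to the two volume estimates above, and invoking the infimum in the definition of $d_{\rm ovr}(K,\cI_n)$ gives the stated bound with constant $2$.
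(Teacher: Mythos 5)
Your argument is correct in outline, but it takes a genuinely different route from the paper. The paper proves Theorem~\ref{ovr} directly and in one pass: writing $\mu^+(K\cap\xi^\perp) = R F(\xi)$ with $F(\theta)=\int_0^{\rho_K(\theta)} r^{n-2}f(r\theta)\,dr$, integrating the trivial pointwise bound $RF(\xi)\le \max_\theta\mu^+(K\cap\theta^\perp)$ against the defining measure $\nu_D$ of a near-optimal intersection body $D\supseteq K$, lower-bounding the left side by $\mu(K)$ via $\rho_D\ge\rho_K$, and upper-bounding $\nu_D(S^{n-1})$ by $2|D|^{1/n}$ through H\"older and the volume formula $n|D|=\int_{S^{n-1}}\rho_D^n$. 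No Tietze extension, no cutoffs, no limits: the intersection-body structure is exploited at the level of $\nu_D$, which in effect re-derives the constant $2$ from \cite{Koldobsky-2012} inside the argument rather than invoking it. Your approach instead uses \cite{Koldobsky-2012} as a black box and pays for this with an approximation scheme (symmetrize $f$, extend, multiply by cutoffs $g_m$, pass to the limit). The one genuinely delicate point, which you flag but understate, is interchanging $m\to\infty$ with the maximum over $\xi$: the bound $\int_{D\cap\xi^\perp}g_m\le \|\tilde f\|_\infty |(K+(1/m)B)\cap\xi^\perp|$ is only a uniform boundedness statement, whereas what is actually needed is that $\sup_{\xi}\bigl|\bigl((K+(1/m)B)\setminus K\bigr)\cap\xi^\perp\bigr|_{n-1}\to 0$. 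For a general (non-convex) star body this is not immediate; one can establish it by a short estimate with the modulus of continuity of $\rho_K$, showing $\rho_{K+\epsilon B}(\theta)\le\rho_K(\theta)+\epsilon+\omega_{\rho_K}(C\epsilon)$ uniformly in $\theta$, since $rB^n\subseteq K$ for some $r>0$. With that gap filled, your proof is valid and yields the same constant, but it is longer and analytically heavier than the paper's direct computation, whose virtue is precisely that it sidesteps all approximation.
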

The right-hand estimate of Theorem \ref{sqrtn} follows from Theorem \ref{ovr} and John's theorem,
since all elllipsoids are intersection bodies (see \cite{Koldobsky-2014}).
For the sake of completeness, we present a short proof of Theorem \ref{ovr} and related results
in Section \ref{intbodies}. In Section 3 we move on to discuss the lower estimate for $\cC_n$ which shows that
the $\sqrt{n}$ upper bound is in fact optimal up to a $\log \log$-term:

\begin{theorem} For any $n \geq 3$ there exists a centrally-symmetric convex body $T \subseteq \RR^n$ and
 an even,  continuous probability density $f: T \rightarrow [0, \infty)$ such that for any affine hyperplane $H \subseteq \RR^n$,
  \begin{equation}  \int_{T \cap H} f \leq C \frac{\sqrt{\log \log n}}{\sqrt{n}} \cdot |T|^{-1/n}, \label{eq_649}
 \end{equation}
where $C > 0$ is a universal constant.
\label{thm_639} \end{theorem}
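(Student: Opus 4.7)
The goal is to exhibit a centrally symmetric $T\subseteq\RR^n$ and an even continuous probability density $f$ on $T$ such that every affine-hyperplane section integral $\int_{T\cap H} f$ is at most $C\sqrt{\log\log n / n}\cdot |T|^{-1/n}$. By rescaling I may assume $|T|^{1/n}$ is of the order of a universal constant, so the required bound takes the form
$$
F(\xi, t)\;:=\;\int_{T\cap(\xi^\perp+t\xi)} f \;\leq\; C\sqrt{\tfrac{\log\log n}{n}}\qquad \text{uniformly in }\xi\in S^{n-1},\, t\in\RR.
$$

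Two structural constraints shape the construction. First, $F(\xi,\cdot)$ is a one-dimensional probability density supported in the interval $[-h_T(\xi), h_T(\xi)]$, so by mass conservation $\max_t F(\xi, t)\geq 1/(2 h_T(\xi))$, and every width of $T$ must satisfy $h_T(\xi)\gtrsim \sqrt{n/\log\log n}$. This forces $T$ to be ``almost round'' in the sense that each of its widths is within a $\sqrt{\log\log n}$ factor of those of the Euclidean ball of the same volume. Second, by Theorem~\ref{ovr}, if $d_{\rm ovr}(T,\cI_n)$ is a universal constant then inequality~(\ref{hyperplane}) for $T$ already holds with a universal constant, so the counterexample body must satisfy $d_{\rm ovr}(T,\cI_n)\gtrsim \sqrt{n/\log\log n}$; in particular $T$ cannot be too close to an ellipsoid, and cannot itself be an intersection body. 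The tension between these two conditions --- $T$ has essentially the width profile of a Euclidean ball, yet is far (in outer volume ratio) from any intersection body --- is precisely what produces the $\sqrt{\log\log n}$ factor.

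For the construction, I would take $T$ to be a symmetric convex body with all widths of order $\sqrt{n/\log\log n}$, obtained as a controlled perturbation of a Euclidean ball: for instance, a symmetric polytope with many carefully placed facets, or a Minkowski combination of a round body with a small non-intersection-body piece, tuned so that $d_{\rm ovr}(T,\cI_n)\gtrsim \sqrt{n/\log\log n}$. I would pair this with an even continuous density $f$ whose one-dimensional marginals have maximum value $\lesssim \sqrt{\log\log n/n}$ --- a natural candidate being a density with $\psi_\alpha$-type tail behavior for $\alpha$ slightly less than $2$, consistent with the additional $\psi_\alpha$ construction mentioned in the abstract. The uniform estimate on $F$ would then be obtained by (i) a pointwise-in-$\xi$ bound on $F(\xi, 0)$ via a tail/concentration estimate for the marginals of $f$, (ii) discretization of $S^{n-1}$ by a $\rho$-net of cardinality $e^{O(n\log(1/\rho))}$ combined with a union bound, and (iii) extension from $t=0$ to general $t$ using evenness of $f$ together with a Brunn-type concavity argument on $t\mapsto F(\xi,t)$. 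The main obstacle is to construct $T$ and $f$ simultaneously satisfying both structural constraints, and I expect the $\sqrt{\log\log n}$ deficiency to emerge from optimizing the free parameters in this tradeoff.
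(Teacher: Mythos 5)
Your structural observations are correct and useful as motivation: the widths of $T$ must all be of order at least $\sqrt{n/\log\log n}\cdot|T|^{1/n}$ by mass conservation of the one-dimensional marginal, and $d_{\rm ovr}(T,\cI_n)$ must be large by Theorem~\ref{ovr}. However, the proposal stops short of an actual construction, and the two concrete design choices you do commit to would both defeat the purpose.

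First, the density. You suggest pairing $T$ with a density that has $\psi_\alpha$-tails for $\alpha$ close to $2$, or alternatively one tame enough that ``a Brunn-type concavity argument on $t\mapsto F(\xi,t)$'' controls the off-center sections. Either choice is fatal. If $f$ is (a restriction of) a log-concave density, or more generally has $\psi_\alpha$-tails with $\alpha$ bounded away from $0$, then inequality~(\ref{eq_1009}) gives $\int_{T\cap H}f\lesssim n^{(2-\alpha)/4}\log n\cdot|T|^{-1/n}$ for \emph{some} hyperplane $H$, which for $\alpha$ near $2$ is nearly constant, and for $\alpha=1$ is $n^{1/4}\log n$ --- neither is compatible with the hope that \emph{no} hyperplane does better than $\sqrt{\log\log n/n}$. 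The density in a genuine counterexample must therefore have extremely heavy tails relative to its scale, which rules out both log-concavity (needed for the Brunn step) and any nontrivial $\psi_\alpha$-condition. The paper's density is a symmetric mixture of $N_1N_2$ translated standard Gaussians centered at $\pm(R_1\theta_i+R_2\eta_j)$ with $R_1,R_2$ comparable to $n$; it is very far from log-concave, and that is essential.

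Second, the body. ``A controlled perturbation of a Euclidean ball,'' ``a polytope with many carefully placed facets,'' or ``a Minkowski combination with a small non-intersection-body piece'' is not a construction; the entire difficulty of the theorem lies in producing a specific $T$ that works. In the paper, $T=4K$ where $K$ is the convex hull of $\pm R_1\theta_i$, $\pm R_2\eta_j$, $\pm n e_k$ for \emph{random} $\theta_i,\eta_j\in S^{n-1}$; the inclusion $\sqrt{n}B^n\subseteq K$ handles the width constraint, and a Gluskin/Khatri--Sidak argument bounds $|K|^{1/n}$. The key technical engine is Lemma~\ref{lem_955}: for a random choice of the $\theta_i,\eta_j$ one simultaneously controls, uniformly over $\xi\in S^{n-1}$ and $t\in\RR$, the quantity $\frac{1}{N_1N_2}\sum_{i,j}\vphi(t+R_1\xi\cdot\theta_i+R_2\xi\cdot\eta_j)$, which by the Gaussian slice formula~(\ref{eq_1600}) is exactly $\int_{t\xi+\xi^\perp}g$. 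Your step (iii) (passing from $t=0$ to general $t$ by concavity) is replaced by this direct estimate valid for all $t$; your step (ii) ($\delta$-net plus union bound) does appear, and its inefficiency is what generates the two-scale iteration and the $\log\log n$ loss. In short, the proposal correctly identifies the target constraints but proposes a class of densities for which the target bound provably cannot hold, and leaves the construction of $T$ --- the heart of the matter --- unspecified.
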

Note that the hyperplane $H$ in Theorem \ref{thm_639} is not required to pass through the origin.
The combination of Theorem \ref{ovr} and Theorem \ref{thm_639} implies the following:

\begin{corollary}
	There exists a centrally-symmetric convex body $T \subseteq \RR^n$ with $d_{\rm ovr}(T, \cI_n) \geq c \sqrt{n} / \sqrt{\log \log n}$, where $c > 0$ is a universal constant.
\end{corollary}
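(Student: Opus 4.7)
The plan is to combine Theorem \ref{ovr} with Theorem \ref{thm_639} directly; no new construction is needed, since the same body $T$ produced by Theorem \ref{thm_639} will serve.

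First I would take $T$ and $f$ as in Theorem \ref{thm_639}, and let $\mu$ be the probability measure on $T$ with density $f$, so that $\mu(T)=1$. Specializing the bound in Theorem \ref{thm_639} to central hyperplanes $H=\xi^{\bot}$ for $\xi \in S^{n-1}$, I obtain
$$
\max_{\xi \in S^{n-1}} \mu^{+}(T \cap \xi^{\bot}) \;\leq\; C\,\frac{\sqrt{\log \log n}}{\sqrt{n}} \cdot |T|^{-1/n}.
$$

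Next I would apply Theorem \ref{ovr} to this body and this measure. That yields
$$
1 \;=\; \mu(T) \;\leq\; 2\, d_{\rm ovr}(T,\cI_n) \cdot \max_{\xi \in S^{n-1}} \mu^{+}(T\cap \xi^{\bot}) \cdot |T|^{1/n}
\;\leq\; 2C\, d_{\rm ovr}(T,\cI_n) \cdot \frac{\sqrt{\log \log n}}{\sqrt{n}},
$$
where the factors $|T|^{1/n}$ and $|T|^{-1/n}$ cancel. Rearranging gives $d_{\rm ovr}(T,\cI_n) \geq c\sqrt{n}/\sqrt{\log \log n}$ with $c = 1/(2C)$.

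There is essentially no obstacle: the only subtlety is that Theorem \ref{thm_639} provides a bound for arbitrary affine hyperplanes while Theorem \ref{ovr} only needs central ones, and that the density $f$ is required to be continuous and even on the centrally symmetric body $T$, which is exactly what Theorem \ref{thm_639} guarantees. The continuity and positivity assumptions implicit in Theorem \ref{ovr} (which is stated for star bodies with a continuous density) are met since $T$ is a convex body, hence a star body, and $f$ is continuous on $T$.
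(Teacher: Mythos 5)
Your proof is correct and follows exactly the route the paper indicates (the paper explicitly states the corollary is "the combination of Theorem \ref{ovr} and Theorem \ref{thm_639}" and gives no further detail). Taking $T$ and $f$ from Theorem \ref{thm_639}, specializing to central hyperplanes, plugging into Theorem \ref{ovr}, and noting the cancellation of $|T|^{\pm 1/n}$ is precisely the intended argument.
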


For $\alpha \in (0,2]$ we say that a measure $\mu$ on $\RR^n$ admits $\psi_{\alpha}$-tails with parameters $(\beta, \gamma)$ if for any linear functional $\ell: \RR^n \rightarrow \RR$
\begin{equation}
\mu \left( \left \{ x \in \RR^n \, ; \, |\ell(x)| \geq t E \right \} \right) \leq \beta \exp(-\gamma t^{\alpha}) \cdot \mu(\RR^n) \qquad \qquad (\textrm{for all} \ t > 0)
\label{eq_935}
\end{equation}
where $E = \int_{\RR^n} |\ell(x)| d \mu(x)$.
It follows from the Brunn-Minkowski inequality that the uniform probability measure $\mu$ on a convex body in $\RR^n$ has  $\psi_1$-tails with parameters $(\beta, \gamma)$
that are universal constants, see, e.g., \cite[Section 2.4]{BGVV}. %In short, we say that such $\mu$ has uniformly subexponential tails.
It follows from the argument by Bourgain (see e.g. \cite[Section 3.3]{BGVV}) that for any  measure
$\mu$ with an even, continuous density supported on a centrally-symmetric convex body $K \subseteq \RR^n$,
\begin{equation}  \mu(K) \leq C(\beta,\gamma) \cdot n^{(2 - \alpha)/4} \log n \cdot \sup_{H \subseteq \RR^n} \mu^+(K \cap H) \cdot |K|^{1/n},
\label{eq_1009} \end{equation}
where the supremum runs over all $(n-1)$-dimensional affine hyperplanes $H \subseteq \RR^n$, where $\mu$ is assumed to have $\psi_{\alpha}$-tails
with parameters $(\beta, \gamma)$, and where $C(\beta, \gamma) > 0$
depends only on $\beta$ and $\gamma$. For completeness, we provide a short argument explaining (\ref{eq_1009}) in an appendix.

\smallskip
Specializing (\ref{eq_1009}) to the case $\alpha = 1$, we obtain the bound $L_n \leq C n^{1/4} \log n$ for Bourgain's slicing problem, which is
not far from the best estimate known to date. In the log-concave case it was proven in \cite{KM} that  the logarithmic factor in (\ref{eq_1009}) is not needed.
The following theorem establishes the near-optimality of the bound (\ref{eq_1009}), up to logarithmic terms:

\begin{theorem} For any $n$ and $0 < \alpha \leq 2$ there exists a centrally-symmetric convex body $T \subseteq \RR^n$ and
 an even,  continuous probability density $f: T \rightarrow [0, \infty)$ with the following properties:
 \begin{enumerate}
 \item[(i)] $\displaystyle \int_{T \cap H} f \leq C_{\alpha} \cdot  n^{(\alpha - 2)/4} \cdot |T|^{-1/n}$ for any affine hyperplane $H \subseteq \RR^n$.
 \item[(ii)] The measure whose density is $f$ admits $\psi_{\alpha}$-tails with parameters $(\tilde{c}_{\alpha}, \tilde{C}_{\alpha})$.
 \end{enumerate}
 Here, $\tilde{c}_{\alpha}, C_{\alpha}, \tilde{C}_{\alpha} > 0$ depend solely on $\alpha \in (0, 2]$.
 \label{thm_1026}
\end{theorem}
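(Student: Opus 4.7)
The plan is to exhibit an explicit pair $(T,f)$ for each $\alpha \in (0,2]$ and verify (i) and (ii).

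For the endpoint $\alpha=2$, the trivial construction suffices: take $T = C\sqrt{n}\, B_2^n$ and let $f$ be the standard Gaussian density restricted to $T$ and renormalized. The $\psi_2$-tails (hence $\psi_\alpha$ for all $\alpha \leq 2$) are classical, and both the max hyperplane integral and $|T|^{-1/n}$ are of constant order, matching $n^{(2-2)/4}=1$.

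For $\alpha \in (0,2)$ the construction is more delicate. My approach would be to modify the example $(T_0,f_0)$ from Theorem~\ref{thm_639} to enforce the $\psi_\alpha$-tail condition by smoothing. Fix a $\psi_\alpha$-density $g$ on $\RR^n$, e.g.\ $g(x) \propto \exp(-|x|_2^\alpha)$; for a scale $\sigma>0$ set $g_\sigma(x) = \sigma^{-n}g(x/\sigma)$. Define
\[
f = f_0 * g_\sigma, \qquad T = T_0 + \sigma \cdot \mathrm{supp}(g),
\]
both centrally symmetric, with $f$ an even continuous probability density on $T$. The main verification points are:
\emph{(a) Hyperplane bound.} For any affine hyperplane $H = \{x\cdot\xi = t\}$, $\int_H f = [(f_0)_\xi * (g_\sigma)_{\perp}](t)$, and Young's inequality gives $\sup_t\int_H f \leq \min\!\bigl(\|(f_0)_\xi\|_\infty,\ \|(g_\sigma)_\perp\|_\infty\bigr)$, where $\|(g_\sigma)_\perp\|_\infty$ scales as $\sigma^{-1} n^{1/2 - 1/\alpha}$.
\emph{(b) Volume.} By Brunn--Minkowski, $|T|^{1/n} \leq |T_0|^{1/n} + \sigma\,|\mathrm{supp}(g)|^{1/n}$.
\emph{(c) $\psi_\alpha$-tails.} The 1D marginals of $f$ are convolutions of bounded-support densities with $\psi_\alpha$-densities at scale $\sigma$, and inherit $\psi_\alpha$-tails from $g$, with constants absolute in $n$ provided $\sigma$ dominates the scale of the $f_0$-marginals.

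\textbf{Main obstacle.} The delicate step is the optimization over $\sigma$ together with the tracking of constants in (c). Enforcing $\psi_\alpha$ with absolute constants $(\tilde c_\alpha, \tilde C_\alpha)$ forces $\sigma$ to be at least comparable to the radial scale of $f_0$'s marginals. This in turn inflates $|T|^{1/n}$, by a factor of order $n^{(2-\alpha)/4}$ over $|T_0|^{1/n}$, which is precisely the gap between the Theorem~\ref{thm_639} rate $\sqrt{\log\log n}/\sqrt{n}$ and the target rate $n^{(\alpha-2)/4}$. Verifying this cancellation requires precise estimates on the 1D marginals of both $f_0$ and $g_\sigma$, and confirming that the $\psi_\alpha$-constants do not deteriorate with $n$; this balance also explains why one does not recover the sharper Theorem~\ref{thm_639} rate here, and matches the upper bound (\ref{eq_1009}) up to the logarithmic factor.
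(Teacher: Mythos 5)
Your route is genuinely different from the paper's. You propose smoothing the $\log\log$--example of Theorem~\ref{thm_639} with a $\psi_\alpha$-density $g_\sigma$ and optimizing $\sigma$. The paper does not smooth anything; it builds a fresh example with a \emph{single} scale $R = n^{1-\alpha/4}$ and $N = \lceil n^8 e^{8 n^{\alpha/2}}\rceil$ i.i.d.\ uniform points $\Theta_i$ on $S^{n-1}$, taking $\mu = \gamma_n * \frac{1}{N}\sum_i\frac{\delta_{R\Theta_i}+\delta_{-R\Theta_i}}{2}$ and $T = 3\,\mathrm{conv}(\pm R\Theta_i, \pm n e_j)$. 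The exponential size of $N$ is what makes the $\psi_\alpha$-condition hold with $\alpha$-only constants: in the worst direction $\xi = \Theta_1$ the empirical mass at $|\langle\cdot,\xi\rangle| \approx R$ is only about $1/N \lesssim e^{-8 n^{\alpha/2}}$, which is exactly the right exponential smallness.

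Unfortunately your smoothing plan has a genuine gap in the constant tracking, and it does not reach the stated bound for any $\alpha < 2$. The $\log\log$--construction has, in the direction $\xi = \eta_1$, a $\nu_0$-mass of about $1/N_2 \approx (n\log^3 n)^{-1}$ located at $|\langle\cdot,\xi\rangle| \approx R_2 = n/\sqrt{\log\log n}$; and similarly, in direction $\xi=\theta_1$, a mass $\approx 1/N_1 = n^{-3}$ at $|\langle\cdot,\xi\rangle|\approx R_1$. After convolving with $g_\sigma$ these atoms persist. For the smoothed measure to admit $\psi_\alpha$-tails with $n$-independent $(\beta,\gamma)$ one needs
\[
\frac{1}{n\log^3 n} \ \lesssim\ \mu\bigl(\{|\langle x,\eta_1\rangle| \ge R_2/2\}\bigr)\ \le\ \beta \exp\bigl(-\gamma (R_2/(2E_{\eta_1}))^\alpha\bigr),
\]
which forces $E_{\eta_1} \gtrsim R_2 (\log n)^{-1/\alpha}$, i.e.\ $\sigma n^{1/\alpha - 1/2} \gtrsim n/\bigl(\sqrt{\log\log n}\,(\log n)^{1/\alpha}\bigr)$. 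This is much larger than what you account for: the relevant quantity is the \emph{diameter-scale} tail of $(f_0)_\xi$, which reaches $\approx n$ up to polylog factors, not the bulk ``radial scale'' $\approx\sqrt{n}$. With $\sigma$ this large, the volume term $|T|^{1/n} \approx |T_0|^{1/n} + \sigma\, n^{1/\alpha - 1/2}$ inflates to order $n/\mathrm{polylog}(n)$ (not $n^{(2-\alpha)/4}$ as you assert), while the marginal bound $\|(g_\sigma)_\perp\|_\infty \approx (\sigma n^{1/\alpha - 1/2})^{-1}$ is its reciprocal. Their product $\int_H f \cdot |T|^{1/n}$ is therefore $\Theta(1)$ --- and this conclusion is insensitive to whether you take $g$ rotation-invariant or a product density --- whereas the target $n^{(\alpha - 2)/4}$ tends to $0$ for every fixed $\alpha<2$. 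Also note that even under your (incorrect) inflation estimate, the product would be of order $\sqrt{\log\log n}\cdot n^{-\alpha/4}$, which already exceeds $n^{(\alpha-2)/4} = n^{\alpha/4 - 1/2}$ for all $\alpha \le 1$.

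In short: smoothing the $\log\log$ example cannot produce the $\psi_\alpha$-example in Theorem~\ref{thm_1026}, because a polynomial number $N_1 N_2$ of atoms out at radius $\approx n$ is incompatible with dimension-free $\psi_\alpha$-tails unless the smoothing scale essentially covers the whole body, at which point the construction degenerates to a trivial example with hyperplane-to-volume ratio $\Theta(1)$. The paper sidesteps this by placing the atoms at the much smaller radius $R = n^{1-\alpha/4}$, which makes the hyperplane marginal directly of order $\sqrt{n}/R = n^{(\alpha-2)/4}$ without any smoothing, and by taking $N \approx e^{8n^{\alpha/2}}$ atoms so that the worst-direction mass $\approx 1/N$ is small enough to be $\psi_\alpha$-admissible. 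Your $\alpha = 2$ endpoint (Gaussian restricted to $C\sqrt{n}B_2^n$) is fine, but that is the only value where the argument closes.
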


Theorem \ref{thm_1026} shows that Bourgain's slicing problem cannot be resolved on the affirmative
if all that is used is the uniformly subexponential tails of linear functionals on convex bodies.

\medskip
Throughout this paper, unless specified otherwise we write $c, C, \tilde{C}$ etc. for various positive, universal constants, whose value is not necessarily
the same in different appearances. We use lower-case $c, \tilde{c}, c_1$ for sufficiently small positive universal constants,
while $C, C_1,\tilde{C}_1$ etc. are sufficiently large universal constants. A convex body $K$ in $\RR^n$
is a compact, convex set with a non-empty interior.
 The standard scalar product between $x, y \in \RR^n$ is denoted by $x \cdot y$ or by $\langle x, y \rangle$.
We write $\log$ for the natural logarithm.

\bigbreak
{\bf Acknowledgements.} We would like to thank Sergey Bobkov for encouraging us to discuss and
think on this problem. We thank the anonymous referee for a thoughtful report. A major part of the work was done when both authors were visiting
the Banff International Research Station during May 21--26, 2017. We would like to thank BIRS for hospitality.
The first-named author was supported in part by a European Research Council (ERC) grant.
The second-named author was supported in part by the US National Science Foundation
grant DMS-1700036.

\section{The outer volume ratio distance from intersection bodies} \label{intbodies}

The {\it Minkowski functional} of a star body $D \subseteq \RR^n$ is defined by
$$\|x\|_D=\min\{a\ge 0:\ x\in aK\} \qquad \qquad (x \in S^{n-1}). $$
Note that $\| x \|_D^{-1} = \rho_D(x)$ for any $x \in S^{n-1}$, where $\rho_D$ is the radial
function of $D$.
The class of intersection bodies was introduced by Lutwak \cite{Lutwak}.
Let $D, L$ be origin-symmetric star bodies in $\RR^n.$ We say that $D$ is the
intersection body of $L$ if the radius of $K$ in every direction is
equal to the $(n-1)$-dimensional volume of the section of $L$ by the central
hyperplane orthogonal to this direction, i.e. for every $\xi\in S^{n-1},$
$$
\rho_D(\xi) = |L\cap \xi^\bot|
= \frac 1{n-1} \int_{S^{n-1}\cap \xi^\bot} \rho_L^{n-1}d\theta=
\frac 1{n-1} R\left(\rho_L^{n-1}\right)(\xi),$$
where $R:C(S^{n-1})\to C(S^{n-1})$ is the {\it spherical Radon transform}
$$Rg(\xi)=\int_{S^{n-1}\cap \xi^\bot} g(x) dx,\qquad \forall g\in C(S^{n-1}).$$
All star bodies $K$ that appear as intersection bodies of star bodies
form {\it the class of intersection bodies of star bodies}.

\smallbreak A more general class of {\it intersection bodies}
is defined as follows; see \cite{GLW}. If $\nu$ is a finite Borel measure on $S^{n-1},$ then
the spherical Radon transform $R\nu$ of $\nu$ is a functional on $C(S^{n-1})$ acting by
$$(R\nu, g)=(\nu, Rg)=\int_{S^{n-1}} Rg(x) d\nu(x),\qquad \forall g\in C(S^{n-1}).$$

\begin{definition}A star body $D$ in $\RR^n$ is called an {\it intersection body}, and we write $D\in \cI_n,$
if there exists a finite Borel measure
$\nu_D$ on $S^{n-1}$ such that
$\rho_D=R\nu_D$ as functionals on $C(S^{n-1}),$  i.e.
\begin{equation}\label{intbody}
\int_{S^{n-1}} \rho_D(x) g(x) dx = \int_{S^{n-1}} Rg(x)d\nu_D(x),\qquad \forall g\in C(S^{n-1}).
\end{equation}
\end{definition}

For example, let us consider the cross-polytope
$$B_1^n= \left \{x\in \RR^n \, ; \, \|x\|_1=\sum_{k=1}^n |x_k| \le 1 \right \}.$$
It was proved in \cite{Koldobsky-1998} that $B_1^n$ is an intersection body. To see this, note that
the function $e^{-\|\cdot\|_1}$ is the Fourier transform of the function
$$\phi(\xi)=\frac 1{\pi^n}\prod_{k=1}^n \frac 1{1+\xi_k^2},\qquad \qquad (\xi\in \RR^n),$$
and use the connection between the Radon and Fourier transforms: For $x \in S^{n-1}$,
\begin{align*} \rho_{B_1^n}(x) & = \|x\|_1^{-1} = \frac 12\int_{\RR} e^{-t\|x\|_1} dt =
\frac 1{\pi^{n-1}} \int_{x^\bot} \prod_{k=1}^n \frac 1{1+\xi_k^2}d\xi \\ &
=\frac 1{\pi^{n-1}}\int_{S^{n-1}\cap x^\bot} \left(\int_0^\infty t^{n-2}\prod_{k=1}^n \frac 1{1+t^2\xi_k^2} dt\right) d\xi. \end{align*}
We get that the radial function of the cross-polytope is the spherical Radon transform of the function
$$\xi\to \frac 1{\pi^{n-1}} \int_0^\infty t^{n-2}\prod_{k=1}^n \frac 1{1+t^2\xi_k^2} dt.$$
This function is integrable on the sphere, but it is not bounded (it takes infinite values on a set of measure zero).
Therefore, $B_1^n$ is an intersection body, but not the intersection body of a star body; see \cite{Koldobsky-1998}
or \cite[Section 4.3]{Koldobsky-book} for details. Note that it was proved in \cite{Koldobsky-1998} that all
polar projection bodies are intersection bodies.

\smallbreak
It was proven  in \cite{Koldobsky-2015} that $d_{\rm ovr}(K,\cI_n)\le e$ for every
unconditional convex body $K$ in $\RR^n.$ In fact, by a result of Lozanovskii \cite{Lo}
(see the proof in \cite[Corollary 3.4]{pisier}), there exists a linear
operator $T$ on $\RR^n$ so that
$T(B_\infty^n) \subset K \subset n T(B_1^n),$
where $B_\infty^n$ is the cube with sidelength 2 in $\RR^n.$
Let $D=nT(B_1^n).$ From the fact that a linear transformation of an intersection body
is an intersection body, the body $D$ is an intersection body in $\RR^n.$
Since $|B_1^n| = 2^n/n!$, we have
$|D|^{1/n}\le 2e |\det T|^{1/n}.$ On the other hand, $|T(B_\infty^n)| = 2^n |\det T|,$ and $T(B_\infty^n)\subset K,$
so $|D|^{1/n} \le e\  |K|^{1/n}.$

\medbreak
We now present a proof of Theorem \ref{ovr} that is slightly shorter than that in \cite{Koldobsky-2015}.

\begin{proof}[{\bf Proof of Theorem \ref{ovr}}] For every $\xi\in {S^{n-1}},$ we have
\begin{equation}\label{max}
\mu^+(K\cap \xi^\bot) \le \max_{\theta\in S^{n-1}} \mu^+ (K\cap \theta^\bot).
\end{equation}
Let $f$ be the continuous density of the measure $\mu.$ Writing the integral in spherical coordinates, we see
that for every $\xi\in S^{n-1}$,
$$\mu^+(K\cap \xi^\bot)= \int_{K\cap \xi^\bot} f =
\int_{S^{n-1}\cap \xi^\bot} \left(\int_0^{\rho_K(\theta)} r^{n-2}f(r\theta)\ dr \right)d\theta
= \int_{S^{n-1}\cap \xi^\bot} F(\theta) d \theta, $$
where
$$ F(\theta) = \int_0^{\rho_K(\theta)} r^{n-2}f(r \theta)\ dr \qquad \qquad (\theta \in S^{n-1}). $$
Therefore, inequality (\ref{max}) can be written in terms of the spherical Radon transform
\begin{equation}\label{max-radon}
R F (\xi)\le \max_{\theta\in S^{n-1}} \mu^+(K\cap \theta^\bot)
\end{equation}
for all $\xi\in S^{n-1}.$ Note that the right-hand side of (\ref{max-radon})
does not depend on $\xi$.

\smallbreak Let $D$ be an intersection body such that the distance $d_{\rm {ovr}}(K,\cI_n)$ is almost realized, i.e.
$K\subset D$ and for some small $\delta > 0$,
\begin{equation}\label{eq2}
|D|^{1/n}\le (1+\delta)d_{\rm {ovr}}(K,\cI_n)|K|^{1/n}.
\end{equation}
Integrating both sides of (\ref{max-radon}) by $\xi$ over the sphere with respect to the measure $\nu_D$
corresponding to $D$ by definition (\ref{intbody}), we get
\begin{equation}\label{eq3}
\int_{S^{n-1}} \rho_D(\theta) \left(\int_0^{\rho_K(\theta)} r^{n-2} f(r\theta) dr\right)d\theta\le
\nu_D(S^{n-1}) \max_{\theta\in S^{n-1}} \mu^+(K\cap \theta^\bot).
\end{equation}
The left-hand side of (\ref{eq3}) is equal to
$$\int_{S^{n-1}}  \left(\int_0^{\rho_K(\theta)}(\rho_D(\theta)-r) r^{n-2} f(r\theta) dr\right)d\theta\
+\ \int_{S^{n-1}}  \left(\int_0^{\rho_K(\theta)} r^{n-1} f(r\theta) dr\right)d\theta$$
\begin{equation}\label{left}
\ge \int_{S^{n-1}}  \left(\int_0^{\rho_K(\theta)} r^{n-1} f(r\theta) dr\right)d\theta = \int_K f= \mu(K),
\end{equation}
because $K\subset D$ implies $\rho_D(\theta)\ge \rho_K(\theta)$ for every $\theta.$

\smallbreak
Now we estimate the left-hand side of (\ref{eq3}) from above. We use $R1(\xi) = |S^{n-2}|$ for every $\xi\in S^{n-1},$  definition (\ref{intbody}), H\"older's inequality
and a standard formula for volume:
$$\nu_D(S^{n-1}) =\frac 1{|S^{n-2}|}\int_{S^{n-1}} R1(\xi) d\nu_D(\xi) = \frac 1{|S^{n-2}|}\int_{S^{n-1}}\rho_D(\xi) d\xi$$
$$\le \frac {|S^{n-1}|^{\frac{n-1}n}}{|S^{n-2}|}\left(\int_{S^{n-1}}\rho_D^n(\xi) d\xi\right)^{\frac 1n}\le 2|D|^{\frac 1n}.$$
By using (\ref{eq2}), sending $\delta\to 0,$ and combining
the estimates above, we obtain  the conclusion of the theorem.
Note that the uniform measure on the sphere was not normalized
in the calculations.
\end{proof}

\section{A counterexample}

We move on to the proof of Theorem \ref{thm_639}. We may clearly assume that the dimension $n$ exceeds a given
universal constant $C$, as otherwise the conclusion of the theorem is trivial.
We shall  need the following well-known Bernstein-type inequality. A proof is provided for completeness:

\begin{lemma} Let $Y_1,\ldots,Y_N$ be independent, identically-distributed random variables attaining values in the interval $[0,1]$.
Let $p \in [0,1]$ satisfy $p \geq \EE Y_1$. Then,
$$ \PP \left( \frac{1}{N} \sum_{i=1}^N Y_i \geq 3 p \right) \leq e^{-p N}.  $$
\label{lem_512}
\end{lemma}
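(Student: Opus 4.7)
The plan is to use the standard Chernoff (exponential moment) method with a judicious choice of parameter. Set $S_N = \sum_{i=1}^N Y_i$. For any $\lambda > 0$, Markov's inequality applied to the monotone function $x \mapsto e^{\lambda x}$ yields
$$ \PP(S_N \geq 3 p N) \;=\; \PP\!\left( e^{\lambda S_N} \geq e^{3 \lambda p N} \right) \;\leq\; e^{-3 \lambda p N} \bigl( \EE e^{\lambda Y_1} \bigr)^N, $$
using independence in the last step.

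The key step is to estimate $\EE e^{\lambda Y_1}$ with a bound that is linear in $\EE Y_1$. Since $y \mapsto e^{\lambda y}$ is convex on $[0,1]$, it lies below the secant line through the endpoints $(0,1)$ and $(1,e^{\lambda})$, so $e^{\lambda y} \leq 1 + (e^{\lambda}-1) y$ for all $y \in [0,1]$. Taking expectations and using $\EE Y_1 \leq p$ together with $1 + t \leq e^t$,
$$ \EE e^{\lambda Y_1} \;\leq\; 1 + (e^{\lambda}-1) \EE Y_1 \;\leq\; 1 + (e^{\lambda}-1) p \;\leq\; \exp\!\bigl( (e^{\lambda}-1) p \bigr). $$
Substituting back,
$$ \PP(S_N \geq 3 p N) \;\leq\; \exp\!\Bigl( \bigl[(e^{\lambda}-1) - 3 \lambda \bigr] p N \Bigr). $$

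It remains to pick $\lambda$ so that $(e^{\lambda}-1) - 3 \lambda \leq -1$. The natural choice $\lambda = 1$ gives $(e-1) - 3 = e - 4 < -1$, which closes the estimate and yields $\PP(S_N \geq 3 p N) \leq e^{-p N}$. There is no real obstacle here; the only thing to watch is getting a clean linear-in-$p$ bound on the moment generating function, which the chord inequality delivers for free because we only need to work on the unit interval.
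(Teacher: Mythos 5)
Your proof is correct and follows essentially the same route as the paper: both apply Markov's inequality to $e^{\lambda S_N}$ and use the bound $\EE e^{\lambda Y_1} \leq 1 + (e^{\lambda}-1)p$, ultimately exploiting $e-4<-1$ at $\lambda=1$. The only cosmetic difference is how the moment-generating-function bound is derived --- you use the chord (secant) inequality from convexity on $[0,1]$, while the paper expands the power series and uses $Y_1^{q-1}\le 1$; and you keep $\lambda$ free before setting it to $1$, while the paper works with $\lambda=1$ from the outset.
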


\begin{proof} Since $Y_1 \in [0,1]$ with $\EE Y_1 \leq p$,
$$ \EE e^{Y_1} =  1 + \sum_{q=1}^{\infty} \frac{\EE Y_1^{q-1} Y_1}{q!}
\leq 1 + \sum_{q=1}^{\infty} \frac{\EE Y_1}{q!} \leq 1 + p (e-1). $$
Therefore,
$$ \EE e^{\sum_{i=1}^N Y_i} = \left( \EE e^{Y_1} \right)^N
\leq  \left( 1 + p (e-1) \right)^N \leq e^{ N p (e-1) }.  $$
By the Markov-Chebyshev inequality,
\begin{equation*}  \PP \left( \frac{1}{N} \sum_{i=1}^N Y_i \geq 3 p \right) \leq e^{-3 N p} \EE e^{\sum_{i=1}^N Y_i} \leq e^{N p (e - 4)} < e^{-p N}.
\tag*{\qedhere}
\end{equation*}
\end{proof}

For $t \in \RR$ we set $\vphi(t) = e^{-t^2/2}$.
Later on, we will apply Lemma \ref{lem_512} for $Y_i = \vphi(t + R \Theta_i \cdot \xi)$, where $\Theta_i$ is
a random point in the sphere $S^{n-1}$ and $\xi$ is a fixed unit vector.

\begin{lemma} Let $n \geq 4$ and let $\Theta \in S^{n-1}$ be a random point, distributed uniformly over $S^{n-1}$. Let $\xi \in S^{n-1}$ be a fixed unit vector.
Then for any $t \in \RR$ and $R > \sqrt{n}$,
$$ \EE \vphi \left(t + R \Theta \cdot \xi \right) \leq C \frac{\sqrt{n}}{R} \cdot \vphi \left( \frac{c \sqrt{n}}{R} t \right), $$
where $C > 0$ and $0 < c < 1$ are universal constants.
 \label{lem_540}
\end{lemma}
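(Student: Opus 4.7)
The plan is to reduce the spherical expectation to a one‐dimensional Gaussian computation via the classical density of $\Theta\cdot\xi$. Write $X=\Theta\cdot\xi$, which by rotation invariance has density
$$ \frac{1}{Z_n}(1-x^2)^{(n-3)/2}\mathbf{1}_{[-1,1]}(x), \qquad Z_n=\int_{-1}^{1}(1-x^2)^{(n-3)/2}\,dx, $$
and use the standard estimate $Z_n\geq c/\sqrt{n}$ coming from $Z_n=B(1/2,(n-1)/2)$ and Stirling (or an elementary pointwise comparison to a Gaussian). So the target inequality reduces to bounding
$$ I(t,R) \;:=\; \int_{-1}^{1} e^{-(t+Rx)^2/2}\,(1-x^2)^{(n-3)/2}\,dx $$
from above by $C'\,\varphi(c\sqrt{n}\,t/R)$ for universal constants.

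The key step is the elementary inequality $(1-x^2)^{(n-3)/2}\leq e^{-(n-3)x^2/2}$ on $[-1,1]$, valid since $\log(1-x^2)\leq -x^2$. Inserting this in $I(t,R)$ and extending the integration to all of $\RR$ gives
$$ I(t,R)\;\leq\; \int_{\RR} \exp\!\left(-\tfrac{1}{2}\bigl[(t+Rx)^2+(n-3)x^2\bigr]\right)dx. $$
Expanding the bracket and completing the square in $x$ with $a=R^2+(n-3)$ and $b=Rt$, the exponent becomes
$$ -\tfrac{1}{2}\,a\bigl(x+b/a\bigr)^2 \;-\; \tfrac{t^2}{2}\cdot\frac{n-3}{R^2+n-3}. $$
The Gaussian integral contributes $\sqrt{2\pi/(R^2+n-3)}\leq \sqrt{2\pi}/R$, while the residual $t$–term is controlled using the hypothesis $R>\sqrt{n}$: indeed $R^2+n-3<2R^2$, so
$$ \frac{n-3}{R^2+n-3}\;\geq\;\frac{n-3}{2R^2}\;\geq\;\frac{c^2 n}{R^2} $$
for a suitable universal $c\in(0,1)$ (assuming $n\geq 4$; the case $n\leq C$ is swallowed by constants). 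Combining,
$$ I(t,R)\;\leq\;\frac{\sqrt{2\pi}}{R}\cdot \varphi\!\left(\frac{c\sqrt{n}}{R}\,t\right). $$

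Multiplying by $1/Z_n\leq C\sqrt{n}$ yields
$$ \EE\,\varphi(t+R\Theta\cdot\xi)\;=\;\frac{I(t,R)}{Z_n}\;\leq\; C\frac{\sqrt{n}}{R}\,\varphi\!\left(\frac{c\sqrt{n}}{R}\,t\right), $$
which is exactly the conclusion of the lemma. There is essentially no obstacle here—the whole argument is bookkeeping of Gaussian constants—but the one point to be careful about is that after the pointwise Gaussian domination of $(1-x^2)^{(n-3)/2}$ one really does want to extend the integration to $\RR$ before completing the square, as this is what produces the clean factor $1/R$ from the Gaussian normalization and allows the exponential gain in $t$ to be read off directly from the quadratic form.
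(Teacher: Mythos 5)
Your proof is correct and follows essentially the same route as the paper: introduce the density of $\Theta\cdot\xi$, bound the normalization constant by $C\sqrt{n}$, dominate $(1-x^2)^{(n-3)/2}$ by $e^{-(n-3)x^2/2}$, extend the integral to $\RR$, and complete the square to obtain the factor $1/R$ and the Gaussian decay in $t$. The only cosmetic difference is that you spell out the completion of the square and the lower bound $\frac{n-3}{R^2+n-3}\geq \frac{c^2n}{R^2}$ in more detail than the paper does.
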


\begin{proof} Denote $Z = \Theta \cdot \xi$. Then $Z$ is a random variable supported in the interval $[-1, 1]$ whose density in this
interval is proportional to the function $s \mapsto (1 - s^2)^{(n-3)/2}$. Setting $k = n-3$ we see that we need to prove that
\begin{equation}  \alpha_k \int_{-1}^1 \vphi(R s + t) \cdot \left( 1 - s^2 \right)^{k/2} ds \leq C \frac{\sqrt{k}}{R} \cdot e^{-\frac{k t^2}{2 R^2}}, \label{eq_533}
\end{equation}
where
\begin{equation}
 \alpha_k^{-1} = \int_{-1}^1 \left( 1 - s^2 \right)^{k/2} ds
\geq \int_{-1/\sqrt{k}}^{1/\sqrt{k}} \left( 1 - s^2 \right)^{k/2} ds \geq \frac{c}{\sqrt{k}}. \label{eq_333}
 \end{equation}
In order to prove (\ref{eq_533}), we note that
$$ \alpha_k \int_{-1}^1 \vphi(R s + t) \cdot \left( 1 - s^2 \right)^{k/2} ds \leq C \sqrt{k} \int_{-\infty}^{\infty} e^{-(R s + t)^2/2 - k s^2/2} ds
= C \sqrt{\frac{2 \pi k}{R^2 + k}} \cdot e^{-\frac{k t^2}{ 2 (R^2 + k)}},
$$
 where we used (\ref{eq_333}) and the inequality $(1 - \alpha)^m \leq \exp(-\alpha m)$, valid for all $0 < \alpha < 1$ and $m > 0$.
 Thus (\ref{eq_533}) is proven.
 \end{proof}

By combining Lemma \ref{lem_512} and Lemma \ref{lem_540} we obtain the following:

\begin{corollary} Let $N \geq n \geq 4$ and let $\Theta_1,\ldots, \Theta_N \in S^{n-1}$ be independent, identically-distributed
random vectors, distributed uniformly over the sphere $S^{n-1}$. Fix $\xi \in S^{n-1}, t \in \RR, R > \sqrt{n}$ and $\alpha > 0$. Then,
$$ \PP \left( \, \frac{1}{N} \sum_{i=1}^N \vphi \left( t + R \xi \cdot \Theta_i   \right) \geq C \max \left \{ \alpha,
\frac{\sqrt{n}}{R} \cdot \vphi \left(\frac{c \sqrt{n}}{{R}} t  \right) \right \} \, \right) \leq e^{-N \alpha}, $$ \label{cor_554}
where $C > 0$ and $0 < c < 1$ are universal constants.
\end{corollary}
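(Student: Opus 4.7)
The plan is to set $Y_i = \vphi(t + R\xi\cdot\Theta_i)$ for $i=1,\ldots,N$ and apply Lemma \ref{lem_512} to these i.i.d.\ random variables, using Lemma \ref{lem_540} to control their common mean.

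First I would observe that since $\vphi$ takes values in $[0,1]$ on the real line, each $Y_i$ lies in $[0,1]$, so Lemma \ref{lem_512} is applicable provided we can specify a valid $p \in [0,1]$ with $p \geq \EE Y_1$. Lemma \ref{lem_540}, applied with the fixed unit vector $\xi$ and parameter $R > \sqrt{n}$, gives an absolute constant $C_1 > 0$ such that
\[
\EE Y_1 \leq C_1 \frac{\sqrt{n}}{R} \cdot \vphi\!\left(\frac{c\sqrt{n}}{R}\, t\right),
\]
where $c \in (0,1)$ is the constant from that lemma. I would then define
\[
p = \max\!\left\{ \alpha,\; C_1 \frac{\sqrt{n}}{R} \vphi\!\left(\frac{c\sqrt{n}}{R}\, t\right) \right\},
\]
so that automatically $p \geq \EE Y_1$ and $p \geq \alpha$.

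Next I would split into two cases depending on the size of $p$. If $p \leq 1$, Lemma \ref{lem_512} directly yields $\PP\!\left(\frac{1}{N}\sum_{i=1}^N Y_i \geq 3p\right) \leq e^{-pN} \leq e^{-\alpha N}$, as $p \geq \alpha$. If instead $p > 1$, then since each $Y_i \leq 1$ we have $\frac{1}{N}\sum Y_i \leq 1 < 3p$ deterministically, so the event in question is empty and the bound is trivial. Choosing the universal constant $C$ in the statement to be $C = 3 \max(1, C_1)$ ensures that $C \max\{\alpha,\frac{\sqrt{n}}{R}\vphi(c\sqrt{n} t/R)\} \geq 3p$ in all cases, so the event whose probability we want to bound is contained in $\{\frac{1}{N}\sum Y_i \geq 3p\}$, and the corollary follows.

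There is no substantive obstacle: the only thing to watch is the regime $p > 1$, where Lemma \ref{lem_512} would be vacuous, but the boundedness of $\vphi$ makes this case trivial. The real content lies entirely in Lemmas \ref{lem_512} and \ref{lem_540}; the corollary is just their clean packaging.
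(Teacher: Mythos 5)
Your proof is correct and matches the paper's argument essentially verbatim: set $Y_i = \vphi(t + R\xi\cdot\Theta_i)$, define $p$ as the maximum, bound $\EE Y_1$ by Lemma~\ref{lem_540}, and conclude via Lemma~\ref{lem_512} using $p\geq\alpha$. Your explicit treatment of the regime $p>1$ (where the event is empty because each $Y_i\leq 1$) is a small point of rigor that the paper silently skips, since Lemma~\ref{lem_512} formally requires $p\in[0,1]$.
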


\begin{proof} Set $Y_i = \vphi( t + R  \xi \cdot \Theta_i )$. Then $Y_1,\ldots, Y_N$ are independent, identically-distributed random variables
attaining values in the interval $[0,1]$. Set $$ p =  \max \left \{ \alpha,
C\frac{\sqrt{n}}{R} \cdot \vphi \left(\frac{c \sqrt{n}}{{R}} t  \right) \right \}, $$
where $c,C > 0$ are the constants from Lemma \ref{lem_540}. Then $\EE Y_1 \leq p$, according to Lemma \ref{lem_540}. By Lemma \ref{lem_512},
$$ \PP \left( \frac{1}{N} \sum_{i=1}^N Y_i \geq 3  p  \right) \leq \exp(-N p)  \leq \exp(-N \alpha), $$
where we used that $p \geq \alpha$ in the last passage.
\end{proof}

The function $\vphi(s) = e^{-s^2/2}$ has a bounded derivative $\vphi^{\prime}(s) = -s e^{-s^2/2}$. Therefore $\vphi$ is a $1$-Lipschitz function
on the entire real line. This Lipschitz property enables us to make the estimate of Corollary \ref{cor_554} uniform in $\xi \in S^{n-1}$,
as explained in the following:

\begin{proposition} Assume that $n \geq 5$, that $N \geq 10 n \log n$ and that $\sqrt{n} \leq R \leq n$.
Let $\Theta_1,\ldots, \Theta_N$ be independent, identically-distributed
random vectors,  distributed uniformly in $S^{n-1}$.
Then with probability of at least $1 - n^{-n}$
the following holds: For all $\xi \in S^{n-1}$ and $t \in \RR$,
\begin{equation}
 \frac{1}{N} \sum_{i=1}^N \vphi \left( t + R \xi \cdot \Theta_i   \right) \leq \frac{C n \log n}{\min \{N, n^3\}} \, + \, C
\frac{\sqrt{n}}{R} \cdot \vphi \left(\frac{c \sqrt{n}}{{R}} t  \right),
\label{eq_608} \end{equation} \label{prop_612}
where $c, C > 0$ are universal constants.
\end{proposition}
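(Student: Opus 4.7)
The plan is to upgrade the fixed-$(\xi,t)$ estimate of Corollary \ref{cor_554} to a uniform bound via a standard $\epsilon$-net argument, exploiting the $1$-Lipschitz property of $\vphi$ (and of the linear functional $\xi\mapsto \xi\cdot\Theta_i$) to pass from net points to arbitrary $(\xi,t)$.

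First I would fix a discretization scale $\delta \asymp 1/n^3$, choose a $\delta$-net $\cN_\xi \subseteq S^{n-1}$ of cardinality at most $(C/\delta)^n = e^{O(n \log n)}$, and truncate the $t$-range at $T := R + 2\sqrt{\log n}$. For $|t| \ge T$ every unit vector $\Theta \in S^{n-1}$ gives $|t + R \xi \cdot \Theta| \ge 2\sqrt{\log n}$, so $\vphi(t + R \xi \cdot \Theta_i) \le 1/n^2$ for all $i$, and the conclusion (\ref{eq_608}) is immediate since $1/n^2 \le n \log n / n^3 \le n \log n / \min\{N, n^3\}$. For $|t| \le T$ I take a $\delta$-net $\cN_t \subseteq [-T, T]$ of cardinality $O(T/\delta) = O(n^4)$.

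Next, for each pair $(\xi', t') \in \cN_\xi \times \cN_t$ I apply Corollary \ref{cor_554} with $\alpha = An\log n / N$ for a large absolute constant $A$. The fixed-pair failure probability is $e^{-N \alpha} = n^{-An}$, and since $|\cN_\xi \times \cN_t| = e^{O(n \log n)}$, the union bound yields a good event of probability at least $1 - n^{-n}$ provided $A$ is chosen large enough. On this event, for arbitrary $\xi \in S^{n-1}$ and $|t| \le T$ I approximate by the nearest net point $(\xi', t')$ and use the $1$-Lipschitz bound
$$ |\vphi(t + R \xi \cdot \Theta_i) - \vphi(t' + R \xi' \cdot \Theta_i)| \le |t - t'| + R |\xi - \xi'| \le (1 + R) \delta \le 2/n^2; $$
a completely analogous Lipschitz estimate absorbs, with negligible cost, the replacement of $\vphi(c\sqrt n\,t'/R)$ by $\vphi(c\sqrt n\,t/R)$ on the right-hand side of the bound produced by Corollary \ref{cor_554}. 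The resulting $1/n^2$-correction is then swallowed by $Cn\log n/\min\{N, n^3\}$, completing the proof of (\ref{eq_608}).

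The main obstacle --- and the reason the statement features $\min\{N, n^3\}$ rather than simply $N$ --- is the tension between net size and Lipschitz error. Keeping $\log |\cN_\xi| = O(n \log n)$ forces $\delta \gtrsim 1/\mathrm{poly}(n)$, and with $R \le n$ this yields a Lipschitz error of order $1/n^2 \asymp n \log n / n^3$, which bounds from above the noise floor that can be achieved no matter how large $N$ is. Everything else is a routine combination of the fixed-pair bound with the union bound.
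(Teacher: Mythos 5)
Your proposal is correct and follows essentially the same strategy as the paper: a $\delta$-net in $(\xi,t)$ with $\delta \asymp n^{-3}$, an application of Corollary~\ref{cor_554} at each net point with $\alpha \asymp n\log n / N$, a union bound over the $e^{O(n\log n)}$ net points, and a Lipschitz transfer from net points to arbitrary $(\xi,t)$. The only cosmetic differences are (a) you truncate the $t$-range at $T = R + 2\sqrt{\log n}$ rather than the paper's coarser $[-n^3, n^3]$, which shrinks the $t$-net but changes nothing in the final bound since the $\xi$-net dominates; and (b) for replacing $\vphi(c\sqrt{n}\,t'/R)$ by $\vphi(c\sqrt{n}\,t/R)$ you implicitly use an additive Lipschitz bound (the extra error $\tfrac{\sqrt{n}}{R}\delta \leq n^{-3}$ is absorbed into the noise-floor term), whereas the paper uses a multiplicative bound $\vphi(c\sqrt{n}\tilde{t}/R) \leq 5\,\vphi(c\sqrt{n}t/R)$ — both work. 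Your closing remark correctly identifies why the $\min\{N, n^3\}$ cap is intrinsic to the net method; this matches the paper's Remark~\ref{rem_1131}.
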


\begin{proof} For all possible choices of $\theta_1,\ldots, \theta_N \in S^{n-1}$, the function
$$ G_{\theta_1,\ldots,\theta_N}(t, \xi) = \frac{1}{N} \sum_{i=1}^N \vphi( t + R \xi \cdot \theta_i ) \qquad \qquad (t \in \RR, \xi \in S^{n-1}) $$
is a Lipschitz function on $\RR \times S^{n-1}$ whose Lipschitz constant is at most $R + 1 \leq n+1$. Set $\delta = n^{-3}$, and let $\cF \subseteq S^{n-1}$
be a $\delta$-net, i.e., for any $x \in S^{n-1}$ there exists $y \in \cF$ with $|x-y| \leq \delta$. By a standard volumetric argument
(see, e.g., \cite{pisier}), there exists a $\delta$-net $\cF \subseteq S^{n-1}$ with cardinality
\begin{equation}  \#(\cF) \leq \left( \frac{5}{\delta} \right)^n \leq e^{6 n \log n}. \label{eq_602} \end{equation}
Let $I \subseteq \RR$ be the set of all integer multiples of $n^{-3}$ that lie in the interval $[-n^3, n^3]$.
Then for any $\xi \in S^{n-1}$ and $t \in [-n^3, n^3]$ there exists $\tilde{\xi} \in \cF$ and $\tilde{t} \in I$ with
\begin{equation}
G_{\theta_1,\ldots,\theta_N}(t, \xi) \leq (n+1) \cdot \frac{2}{n^3} + G_{\theta_1,\ldots,\theta_N}(\tilde{t}, \tilde{\xi}),
\label{eq_252}
\end{equation}
by the aforementioned  Lipschitz property
of $G_{\theta_1,\ldots,\theta_N}$. Let us now apply  Corollary \ref{cor_554} with $\alpha = 10 (n \log n) / N$, to obtain
\begin{align} \label{eq_221} \PP & \left( \forall \xi \in \cF, t \in I, \quad  G_{\Theta_1,\ldots,\Theta_N}(t,\xi) \leq C \max \left \{ \alpha,
\frac{\sqrt{n}}{R} \cdot \vphi \left(\frac{c \sqrt{n}}{{R}} t  \right) \right \} \right) \\ & \geq 1 - \#(\cF) \cdot \#(I) \cdot e^{-N \alpha} \geq 1 - n^7 \cdot e^{6 n \log n} \cdot e^{-10 n \log n} \geq 1 - n^{-n}. \nonumber \end{align}
The constant $c$ in (\ref{eq_221}) is at most one. Hence for any $t, \tilde{t} \in [-n^3, n^3]$ with $|t - \tilde{t}| \leq n^{-3}$ we have that
$\vphi(c \sqrt{n} \cdot \tilde{t} / R) \leq 5 \vphi(c \sqrt{n} \cdot t / R)$. We therefore conclude from (\ref{eq_252}) and (\ref{eq_221}) that
\begin{align*}  \PP  \left( \forall \xi \in S^{n-1}, |t| \leq n^3, \quad  G_{\Theta_1,\ldots,\Theta_N}(t,\xi) \leq
\frac{C n \log n}{N } +
\frac{\tilde{C} \sqrt{n}}{R} \cdot \vphi \left(\frac{c \sqrt{n}}{{R}} t  \right) \right)  \geq 1 - n^{-n}. \nonumber \end{align*}
We have thus proven that with probability of at least $1-n^{-n}$, inequality (\ref{eq_608}) holds true
for all $\xi \in S^{n-1}$ and $|t| \leq n^3$. The validity of (\ref{eq_608}) when $|t| > n^3$ is much easier, as in this case
$$ \frac{1}{N} \sum_{i=1}^N \vphi \left( t + R \xi \cdot \Theta_i   \right) \leq \frac{1}{N} \sum_{i=1}^N \vphi \left( |t| - n \right) \leq \vphi(n^3 - n) \leq e^{-10 n} \leq \frac{n \log n}{\min \{N, n^3\}},  $$
with probability one. This completes the proof.
\end{proof}

\begin{remark}{\rm
The use of the $\delta$-net in the proof of Proposition \ref{prop_612}
is probably not optimal, and it is the reason for the  appearance
of the $\log \log$-factor in our result. We suspect that this factor may be improved
or eliminated by using more sophisticated tools from the theory of Gaussian processes,
such as Slepian's lemma, Talgrand's majorizing measure or related results. In fact, perhaps Gordon's minmax theorem
may be used in order to improve the double logarithm to a triple logarithm in Theorem \ref{thm_530}. These considerations
will be expanded upon elsewhere.
} \label{rem_1131}
\end{remark}

By iterating Proposition \ref{prop_612} twice we obtain the following:

\begin{lemma} Assume that $n \geq C_1$ and let us set
 \begin{equation} N_1 = n^3, \quad N_2 = \lceil n \log^3 n \rceil, \quad R_1 = n / \sqrt{\log n} \quad \text{and} \quad  R_2 = n / \sqrt{\log \log n}. \label{eq_130} \end{equation}
Then there exist unit vectors $\theta_1,\ldots,\theta_{N_1}, \eta_1,\ldots, \eta_{N_2} \in S^{n-1}$ such that for all $\xi \in S^{n-1}$ and $t \in \RR$,
$$
 \frac{1}{N_1 N_2} \sum_{i=1}^{N_1} \sum_{j=1}^{N_2} \vphi \left( t + R_1 \xi \cdot \theta_i  + R_2 \xi \cdot \eta_j \right) \leq \frac{C}{\sqrt{n} \log n} \, + \, C
\frac{\sqrt{n}}{R_2} \cdot \vphi \left(c \frac{\sqrt{n}}{{R_2}} t  \right), $$
where $c, C, C_1 > 0$ are universal constants.
\label{lem_955}
\end{lemma}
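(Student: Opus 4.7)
The plan is to apply Proposition \ref{prop_612} twice in succession, first to produce $\theta_1,\ldots,\theta_{N_1}$ with radius $R_1$ and sample size $N_1$, and then independently to produce $\eta_1,\ldots,\eta_{N_2}$ with a suitably rescaled radius $\tilde R_2$ and sample size $N_2$. The structural reason the iteration is available is that the right-hand side of Proposition \ref{prop_612} contains a term of the form $\vphi(c\sqrt n/R\cdot s)$, which is itself a Gaussian of the same shape as $\vphi$; after substituting $s=t+R_2\,\xi\cdot\eta_j$ and rescaling the argument, it can be rewritten as $\vphi(\tilde t+\tilde R_2\,\xi\cdot\eta_j)$ and fed into a second instance of Proposition \ref{prop_612}.

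Concretely, I would first sample $\Theta_1,\ldots,\Theta_{N_1}$ uniformly and independently on $S^{n-1}$ and invoke Proposition \ref{prop_612} with $R=R_1=n/\sqrt{\log n}$ and $N=N_1=n^3$. Both hypotheses $\sqrt n\le R_1\le n$ and $N_1\ge 10n\log n$ hold for $n\ge C_1$, so with probability $\ge 1-n^{-n}$ one obtains the uniform bound
\begin{equation*}
\frac{1}{N_1}\sum_{i=1}^{N_1}\vphi(s+R_1\xi\cdot\Theta_i)\le \frac{C\log n}{n^2}+\frac{C_1\sqrt{\log n}}{\sqrt n}\,\vphi\!\left(\frac{c_1\sqrt{\log n}}{\sqrt n}\,s\right)
\end{equation*}
for every $\xi\in S^{n-1}$ and $s\in\RR$. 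Setting $s=t+R_2\,\xi\cdot\eta$ and averaging over $\eta$ turns the rightmost factor into $\vphi(\tilde t+\tilde R_2\,\xi\cdot\eta)$ with $\tilde t=c_1(\sqrt{\log n}/\sqrt n)\,t$ and $\tilde R_2=c_1\sqrt n\sqrt{\log n}/\sqrt{\log\log n}$. A direct check shows $\sqrt n\le\tilde R_2\le n$ and $N_2\ge 10n\log n$ for $n$ large, so a second, independent application of Proposition \ref{prop_612} with $R=\tilde R_2$ and $N=N_2$ produces $\eta_1,\ldots,\eta_{N_2}$ controlling the $j$-average.

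Chaining the two bounds yields three error terms: $C\log n/n^2$, a cross term of size $(\sqrt{\log n}/\sqrt n)\cdot(Cn\log n/N_2)=O(1/(\sqrt n\log^{3/2} n))$, and the composed Gaussian, whose argument simplifies via $(c\sqrt n/\tilde R_2)\tilde t=c_2(\sqrt{\log\log n}/\sqrt n)\,t=c_2(\sqrt n/R_2)\,t$ using $\sqrt n/R_2=\sqrt{\log\log n}/\sqrt n$, and whose prefactor collapses to a constant times $\sqrt{\log\log n}/\sqrt n=\sqrt n/R_2$. The first two error terms are both dominated by $C/(\sqrt n\log n)$, and the Gaussian term matches the one stated in Lemma \ref{lem_955}. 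A union bound over the two independent random draws ensures both estimates hold simultaneously with probability $\ge 1-2n^{-n}>0$, which furnishes deterministic choices of $\theta_i$ and $\eta_j$. The main obstacle is nothing more than careful bookkeeping: the parameters in \eqref{eq_130} are tuned precisely so that $\tilde R_2$ lands in the admissible window $[\sqrt n,n]$ for the second invocation of Proposition \ref{prop_612}, and so that the residual errors from both applications collapse to $C/(\sqrt n\log n)$.
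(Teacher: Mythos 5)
Your proposal is correct and follows essentially the same route as the paper: apply Proposition \ref{prop_612} with $(R_1,N_1)$, observe that the resulting Gaussian term $\vphi(c\sqrt{n}s/R_1)$ with $s = t + R_2\,\xi\cdot\eta_j$ has the same form as the original functional with rescaled parameters $\tilde t$ and $\tilde R_2 = cR_2\sqrt{n}/R_1$, check that $\tilde R_2$ lands in $[\sqrt n, n]$ and $N_2 \geq 10n\log n$, and apply Proposition \ref{prop_612} a second time. The arithmetic simplifications (the cross-term collapsing to $O(1/(\sqrt n\log^{3/2}n))$, the prefactor and Gaussian argument reducing to $\sqrt{n}/R_2$ and $c\sqrt{n}\,t/R_2$) all match the paper; the only cosmetic difference is that the paper fixes the $\theta_i$ first and then selects the $\eta_j$, whereas you take a union bound over two independent draws, which is equally valid since neither event depends on the other's sample.
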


\begin{proof}
We may assume that $n \geq 5$ is sufficiently large so that $c R_2 / R_1 > 1$, where throughout this proof $c > 0$ is the constant from Proposition \ref{prop_612}.
By the conclusion of Proposition \ref{prop_612}, we may fix unit vectors $\theta_1,\ldots,\theta_{N_1} \in S^{n-1}$
such that for all $\xi \in S^{n-1}$ and $s \in \RR$,
\begin{equation}
 \frac{1}{N_1} \sum_{i=1}^{N_1} \vphi \left( s + R_1 \xi \cdot \theta_i   \right) \leq \frac{C n \log n }{N_1} \, + \, C
\frac{\sqrt{n}}{R_1} \cdot \vphi \left(\frac{c \sqrt{n}}{{R_1}} s  \right). \label{eq_1115}
\end{equation}
Note that $C n (\log n) / N_1 \leq C / n$. In particular, for any choice of $\eta_1,\ldots,\eta_{N_2} \in S^{n-1}$, and for any $\xi \in S^{n-1}$ and $t \in \RR$,
$$
 \frac{1}{N_1 N_2} \sum_{i=1}^{N_1} \sum_{j=1}^{N_2} \vphi \left( t + R_1 \xi \cdot \theta_i  + R_2 \xi \cdot \eta_j \right) \leq \frac{C}{n} \, + \, C
\frac{\sqrt{n}}{R_1} \cdot \frac{1}{N_2} \sum_{j=1}^{N_2} \vphi \left(\frac{c \sqrt{n}}{{R_1}} t  \, + \, \frac{c R_2 \sqrt{n}}{R_1} \xi \cdot \eta_j  \right), $$
where we used (\ref{eq_1115}) with $ s = t + R_2 \xi \cdot \eta_j$.
Let us now apply Proposition \ref{prop_612} with $$ R = c R_2 \sqrt{n} / R_1 > \sqrt{n} \qquad \text{and} \qquad  N = N_2 \geq n \log^3 n \geq 10 n \log n. $$
From the conclusion of this proposition we obtain that there exist
unit vectors $\eta_1,\ldots,\eta_{N_2} \in S^{n-1}$ such that for all $\xi \in S^{n-1}$ and $t \in \RR$,
\begin{align*}  \frac{1}{N_2} \sum_{j=1}^{N_2} \vphi \left(\frac{c \sqrt{n}}{{R_1}} t  \, + \, \frac{c R_2 \sqrt{n}}{R_1} \xi \cdot \eta_j  \right)
& \leq \frac{C n \log n }{N_2} \, + \, C
\frac{\sqrt{n}}{R} \cdot \vphi \left(\frac{c \sqrt{n }}{R} \cdot \frac{c \sqrt{n}}{R_1} t  \right) \\ & \leq \frac{\tilde{C}}{\log^2 n} \, + \, C
\frac{\sqrt{n}}{R} \cdot \vphi \left(\frac{c \sqrt{n}}{{R_2}} t  \right). \end{align*}
We may combine the two inequalities above to obtain that for all $\xi \in S^{n-1}$ and $t \in \RR$,
\begin{equation*}
 \frac{1}{N_1 N_2} \sum_{i=1}^{N_1} \sum_{j=1}^{N_2} \vphi \left( t + R_1 \xi \cdot \theta_i  + R_2 \xi \cdot \eta_j \right) \leq \frac{\tilde{C}}{\sqrt{n} \log^{3/2} n} \, + \, C
\frac{\sqrt{n}}{R_2} \cdot \vphi \left(\frac{c \sqrt{n}}{{R_2}} t  \right). \tag*{\qedhere} \end{equation*}
\end{proof}

The reason we iterated Proposition \ref{prop_612} only twice and not thrice or more in the proof of Lemma \ref{lem_955}
is basically the non-optimal use  of the $\delta$-net alluded to in Remark \ref{rem_1131}.

\smallbreak Write $e_1,\ldots,e_n$ for the standard unit vectors in $\RR^n$. Let $\theta_1,\ldots,\theta_{N_1}$
and $\eta_1,\ldots,\eta_{N_2}$ be the unit vectors whose existence is proven in Lemma \ref{lem_955}.
We now define the centrally-symmetric convex body $K \subseteq \RR^n$ to be the convex hull of the $2(N_1 + N_2 + n)$ vectors
$$ \pm R_1 \theta_1,\ldots, \pm R_1 \theta_{N_1}, \quad \pm R_2 \eta_1,\ldots, \pm R_2 \eta_{N_2}, \quad \pm n e_1, \ldots, \pm n e_n. $$
We write $\gamma_n$ for the standard Gaussian measure in $\RR^n$, whose density is $$ x \mapsto (2 \pi)^{-n/2} \exp(-|x|^2/2). $$
We will use below the standard bound $(2 \pi)^{-1/2} \int_t^{\infty} \vphi(s) ds \leq \vphi(t)/2$ for all $t \geq 0$.

\begin{lemma} We have that $\displaystyle |K| \leq C^n$, where $C > 0$ is a universal constant. \label{cor_gluskin}
\end{lemma}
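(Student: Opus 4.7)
The plan is to decompose $K$ via a Minkowski-sum inclusion and bound each piece. First, I would show that
\begin{equation*}
K \subseteq n B_1^n + T_1 + T_2, \qquad T_1 := \conv\{\pm R_1 \theta_i\}_{i=1}^{N_1}, \quad T_2 := \conv\{\pm R_2 \eta_j\}_{j=1}^{N_2}.
\end{equation*}
This follows by writing any $x \in K$ as a convex combination of the $2(N_1+N_2+n)$ generators, grouping the combination according to which of the three symmetric families each generator belongs to, and using that each of $n B_1^n, T_1, T_2$ is origin-symmetric and convex (so that $\lambda P \subseteq P$ for $\lambda \in [0,1]$). Thus $x$ decomposes as $a + b + c$ with $a \in nB_1^n$, $b \in T_1$, $c \in T_2$.

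Next, I would bound each piece's volume. By Stirling's formula, $|n B_1^n|^{1/n} = 2n/(n!)^{1/n} \leq 2e$. For $T_1$ and $T_2$, I would invoke the standard volume estimate for the convex hull of few points (a Gluskin-type bound obtained via a Dudley/Sudakov entropy estimate on the mean width together with Urysohn's inequality): for vectors of Euclidean norm at most $r$,
\begin{equation*}
|\conv(\pm v_1, \ldots, \pm v_N)|^{1/n} \leq C r \, \frac{\sqrt{\log(1 + N/n)}}{n}.
\end{equation*}
The parameters in Lemma \ref{lem_955} are calibrated so that the right-hand side is $O(1)$ in each case: for $T_1$ one gets $(n/\sqrt{\log n}) \sqrt{2\log n}/n = \sqrt{2}$, and for $T_2$ one gets $(n/\sqrt{\log\log n}) \sqrt{3\log\log n}/n = \sqrt{3}$. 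Hence $|T_1|^{1/n}, |T_2|^{1/n} \leq C$.

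To combine these, note that Brunn--Minkowski gives only a lower bound on $|A+B+C|^{1/n}$. I would instead pass to the polar and apply the Santalo inequality: $K^\circ = n^{-1} B_\infty^n \cap T_1^\circ \cap T_2^\circ$, and it suffices to show $|K^\circ|^{1/n} \gtrsim 1/n$. Each factor has volume-radius of order $1/n$ (for $n^{-1}B_\infty^n$ this is explicit; for $T_1^\circ, T_2^\circ$ one invokes the Bourgain--Milman inequality against the just-established bounds on $|T_i|$), and one estimates the intersection via the uniform measure on $n^{-1} B_\infty^n$: rescaling, the task becomes to show that for $z$ uniform on $B_\infty^n$ the slab constraints $|\langle \theta_i, z\rangle| \leq \sqrt{\log n}$ and $|\langle \eta_j, z\rangle| \leq \sqrt{\log \log n}$ are jointly satisfied on a set of exponential measure $\geq c^n$. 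Santalo's inequality $|K|^{1/n} \leq |B_2^n|^{2/n}/|K^\circ|^{1/n}$ then yields $|K|^{1/n} = O(1)$, i.e.\ $|K| \leq C^n$.

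The hard part is the last step: the individual volume bounds and their polars are standard, but the intersection estimate on the polar side (or, equivalently, a reverse-Brunn-Minkowski type bound on the Minkowski sum) is delicate and relies on the specific random-like structure of the $\theta_i$'s and $\eta_j$'s coming from the construction in Proposition \ref{prop_612}; the sharpened convex-hull bound with $\sqrt{\log(1+N/n)}$ rather than the cruder $\sqrt{\log N}$ is essential, and an averaging argument shows that one may assume the $\theta_i, \eta_j$ simultaneously realize both the concentration property of Lemma \ref{lem_955} and the convex-hull volume estimate.
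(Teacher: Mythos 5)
Your opening decomposition $K \subseteq nB_1^n + T_1 + T_2$ and the Gluskin-type bounds $|T_1|^{1/n}, |T_2|^{1/n} = O(1)$ turn out to be a detour: as you yourself note, Brunn--Minkowski runs the wrong way, and once you switch to the polar side those individual volume bounds are never actually used. The proof really begins only at ``estimate $|K^\circ|$ via a measure argument and apply Santal\'o,'' and there it stalls. You leave the intersection estimate --- that the joint slab constraints are satisfied with probability at least $c^n$ --- as ``the hard part,'' and you attribute its difficulty to ``the specific random-like structure of the $\theta_i$'s and $\eta_j$'s.'' That diagnosis is wrong, and it is exactly the missing ingredient. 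The paper's estimate of $\gamma_n(5nK^\circ)$ rests on the Khatri--Sidak correlation inequality, which factors the Gaussian measure of the intersection of symmetric slabs into the product of the marginal measures. This requires nothing about the $\theta_i, \eta_j$ beyond their being unit vectors: each factor is $\PP(|R_1 Z\cdot\theta_i|\le 5n) \ge 1-\varphi(5n/R_1)$, etc., which depends only on $R_1, R_2, n$ and the number of slabs $N_1, N_2, n$. With the chosen parameters the product is at least $e^{-\tilde c n}$, and since the Gaussian density is at most $(2\pi)^{-n/2}$ this converts directly to $|5nK^\circ| \ge c^n$, whence Santal\'o finishes. So the argument is entirely deterministic once $N_1, N_2, R_1, R_2$ are fixed; no averaging over the random choices is needed to reconcile the conclusion of Lemma \ref{lem_955} with the volume bound.

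Your variant --- replacing the Gaussian by the uniform measure on $n^{-1}B_\infty^n$ --- could in principle be made to work, but you would then need a correlation inequality for the cube (and a way to pass from cube-measure to volume), which is strictly harder to set up than the Gaussian version, where Khatri--Sidak and the density cap $(2\pi)^{-n/2}$ both come for free. In short: the structure you reach for (polar body, product-of-slabs probability estimate, Santal\'o) is the right one, but the key lemma that makes the intersection estimate go through --- Khatri--Sidak --- is absent, and the preliminary Minkowski-sum/Gluskin material should be discarded.
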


\begin{proof} We will mimick an argument by Gluskin \cite{G}. For a centrally-symmetric convex body $K \subseteq \RR^n$ we denote its polar body by
$$ K^{\circ} = \left \{ x \in \RR^n \, ; \, \forall y \in K, \, x \cdot y \leq 1  \right \}.$$
Let $Z$ be a standard Gaussian random vector in $\RR^n$. According to the Khatri-Sidak lemma  (\cite{Kh}, \cite{Si}, see also \cite{G}
for a simple proof),
\begin{align*}
\PP( Z \in 5 n K^{\circ} ) & = \PP \left( \forall i,j,k, \quad |R_1 Z \cdot \theta_i| \leq 5 n, \ |R_2 Z \cdot \eta_j| \leq 5 n \ \text{and} \ |n Z \cdot e_k| \leq 5  \right) \\ & \geq \prod_{i=1}^{N_1} \PP \left( |R_1 Z \cdot \theta_i| \leq 5 n \right) \cdot
\prod_{j=1}^{N_2} \PP \left( |R_2 Z \cdot \eta_j| \leq 5 n \right) \cdot \prod_{k=1}^{n} \PP \left( |n Z \cdot e_k| \leq 5 n \right).
\end{align*}
Since $Z \cdot \theta_i$ is a standard Gaussian random variable, we know that
$$ \PP \left( |R_1 Z \cdot \theta_i| \leq 5 n \right) = 1 - \frac{2}{\sqrt{2 \pi}} \int_{5n/R_1}^{\infty} \vphi(s) ds \geq 1 - \vphi(5 n / R_1). $$
Consequently,
$$  \gamma_n( 5 n K^{\circ} )  = \PP( Z \in 5 n K^{\circ} ) \geq \left( 1 - \vphi(5 n / R_1) \right)^{N_1} \cdot \left( 1 - \vphi(5 n / R_2) \right)^{N_2}
\cdot \left(1 - \vphi(5) \right)^n. $$
Recalling from (\ref{eq_130}) the values of our parameters, we obtain
$$ \gamma_n( 5 n K^{\circ} ) \geq \left( 1 - \frac{1}{n^{10}} \right)^{n^3} \cdot  \left( 1 - \frac{1}{(\log n)^{10}} \right)^{1 + n \log^3 n} \cdot c^n
\geq e^{-\tilde{c} n}. $$
Since the density of the measure $\gamma_n$ does not exceed $(2 \pi)^{-n/2}$, we conclude that
$$ |5 n K^{\circ} | > c^n. $$
The conclusion of the lemma now follows from the Santalo inequality (see e.g. \cite[Theorem 1.3.4]{BGVV}).
\end{proof}

Since $e_1,\ldots, e_n \in K / n$ while $K \subseteq \RR^n$ is convex and centrally-symmetric, we know that
\begin{equation}
K \supseteq \sqrt{n} B^n.  \label{eq_625}
\end{equation}
Recall that  $\int_{\RR^n} |x|^2 d \gamma_n(x) = n$. By the Markov-Chebyshev inequality,
\begin{equation}
\gamma_n \left( 2 \sqrt{n} B^n \right) \geq 1/2.\label{eq_313}
\end{equation}
Let us now define the probability measure $\mu$ to be the convolution
$$ \mu = \gamma_n * \left( \frac{1}{N_1 N_2 } \sum_{i=1}^{N_1} \sum_{j=1}^{N_2} \frac{\delta_{R_1 \theta_i + R_2 \eta_j} + \delta_{-R_1 \theta_i - R_2 \eta_j} }{2} \right)$$
where $\delta_y$ is the delta measure
at the point $y \in \RR^n$. Then $\mu$ is a probability measure in $\RR^n$.

\begin{lemma} $\displaystyle \mu(4 K) \geq 1/2$. \label{lem_647}
\end{lemma}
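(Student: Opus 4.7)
The plan is to show that $4K$ contains the Minkowski sum $2\sqrt{n}B^n + 2K$, and then invoke the Gaussian concentration bound already recorded in (\ref{eq_313}). Concretely, I would argue that every translation point $\pm(R_1 \theta_i + R_2 \eta_j)$ appearing in the support of the discrete factor of $\mu$ lies in $2K$, and that the Gaussian mass of the ball $2\sqrt{n} B^n \subseteq 2K$ is at least $1/2$.

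First I would observe that since $R_1 \theta_i, R_2 \eta_j$ are among the vertices used to define $K$, we have $R_1 \theta_i, R_2 \eta_j \in K$. Because $K$ is convex and centrally-symmetric, this gives $R_1 \theta_i + R_2 \eta_j = 2 \cdot \frac{R_1 \theta_i + R_2 \eta_j}{2} \in 2K$, and by symmetry the negated points also lie in $2K$. Next, by (\ref{eq_625}) we have $K \supseteq \sqrt{n} B^n$, hence $2K \supseteq 2\sqrt{n} B^n$. Combining these facts, for any point $y$ of the form $\pm(R_1 \theta_i + R_2 \eta_j)$ and any $x \in 2\sqrt{n} B^n$, the sum $x + y$ lies in $2K + 2K = 4K$.

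Therefore, writing $\mu = \gamma_n * \nu$ where $\nu$ is the uniform probability measure on the $2 N_1 N_2$ shift points, Fubini gives
\begin{equation*}
\mu(4K) \;=\; \int_{\RR^n} \gamma_n\bigl(4K - y\bigr)\, d\nu(y) \;\geq\; \int_{\RR^n} \gamma_n\bigl(2\sqrt{n} B^n\bigr)\, d\nu(y) \;\geq\; \frac{1}{2},
\end{equation*}
where the first inequality uses that for each $y \in \mathrm{supp}(\nu) \subseteq 2K$, the set $4K - y$ contains $2K$ and hence $2\sqrt{n} B^n$, and the last inequality is exactly (\ref{eq_313}).

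No step here is a genuine obstacle; the lemma is essentially a bookkeeping exercise combining convexity of $K$, the containment $\sqrt{n} B^n \subseteq K$, and the Markov-Chebyshev bound on the Gaussian norm. The only point requiring a bit of care is the verification that the translates $R_1 \theta_i + R_2 \eta_j$ indeed land in $2K$ rather than merely in $K$, which is where the factor of $2$ in the statement $\mu(4K) \geq 1/2$ (as opposed to $\mu(2K) \geq 1/2$) originates.
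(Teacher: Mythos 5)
Your proof is correct and follows essentially the same route as the paper: both rely on writing $4K = 2K + 2K$, placing the shift points $\pm R_1\theta_i \pm R_2\eta_j$ in $2K$ via convexity, using the containment $\sqrt{n}B^n \subseteq K$ from (\ref{eq_625}), and finishing with the Gaussian mass bound $\gamma_n(2\sqrt{n}B^n) \geq 1/2$ from (\ref{eq_313}).
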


\begin{proof} For any $i$ and $j$, the convex body $4  K = 2 K + 2 K $ contains the set $\pm R_1 \theta_i \pm R_2 \eta_j + 2 \sqrt{n} B^n$,
according to (\ref{eq_625}). The measure $\mu$ is an average of translates of $\gamma_n$, each centered at a point
of the form $\pm R_1 \theta_i \pm R_2 \eta_j$. Consequently,
\begin{equation*} \mu (4 K) \geq  \gamma_n(2 \sqrt{n} B^n) \geq \frac{1}{2}. \tag*{\qedhere} \end{equation*}
\end{proof}

Write $g$ for the continuous density of the measure $\mu$. Setting $\vphi_n(x) =  \exp(-|x|^2/2)$
for $x \in \RR^n$, we have
$$ g(x) = \frac{1}{N_1 N_2} \sum_{i=1}^{N_1} \sum_{j=1}^{N_2} \frac{\vphi_n(x + R_1 \theta_i + R_2 \eta_j) +
\vphi_n(x - R_1 \theta_i - R_2 \eta_j)}{2 \cdot (2 \pi)^{n/2}}. $$
Note that for any $\xi \in S^{n-1}, z \in \RR^n$ and $t \geq 0$,
\begin{equation}
 \int_{t \xi + \xi^{\perp}} \frac{\vphi_n(x + z)}{(2 \pi)^{n/2}} dx = \frac{\vphi(t + z \cdot \xi)}{\sqrt{2 \pi}}. \label{eq_1600}
 \end{equation}

\begin{lemma} For any $\xi \in S^{n-1}$ and $t \geq 0$,
$$ \int_{t \xi + \xi^{\perp}} g \leq C \frac{\sqrt{\log \log n}}{\sqrt{n}}. $$
 \label{lem_637}
\end{lemma}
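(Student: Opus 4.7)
The plan is to reduce the integral of $g$ over the hyperplane $t\xi + \xi^\perp$ directly to the expression controlled by Lemma \ref{lem_955}, and then substitute the explicit values of the parameters $R_1, R_2, N_1, N_2$ chosen in (\ref{eq_130}).

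First I would use formula (\ref{eq_1600}) term-by-term on the explicit expression of $g$ as a convex combination of translated Gaussians. Each Gaussian $\vphi_n(\cdot \pm (R_1 \theta_i + R_2 \eta_j))/(2\pi)^{n/2}$, when integrated over the hyperplane $t\xi + \xi^{\perp}$, produces a one-dimensional factor $\vphi(t \pm R_1 \xi \cdot \theta_i \pm R_2 \xi \cdot \eta_j)/\sqrt{2\pi}$. Hence
\[
\int_{t\xi + \xi^\perp} g \,=\, \frac{1}{2\sqrt{2\pi}\,N_1 N_2} \sum_{i,j} \Bigl[ \vphi\bigl(t + R_1 \xi\cdot \theta_i + R_2 \xi\cdot\eta_j\bigr) + \vphi\bigl(t - R_1 \xi\cdot\theta_i - R_2 \xi\cdot\eta_j\bigr)\Bigr].
\]

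Second, I would apply Lemma \ref{lem_955} twice: once with the real number $t$ in the role of ``$t$'' in the lemma, and once with $-t$ (recall the lemma is valid for every $t \in \RR$, and note that the sets of vectors $\{\theta_i\}$ and $\{\eta_j\}$ are not assumed symmetric, but the lemma's bound is already symmetric under $t \mapsto -t$ since $\vphi$ is even). This yields
\[
\int_{t\xi + \xi^\perp} g \,\leq\, \frac{C}{\sqrt{n}\log n} \,+\, \frac{C\sqrt{n}}{R_2}\cdot \vphi\!\left(\frac{c\sqrt{n}}{R_2}\,t\right).
\]

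Third, I would substitute the value $R_2 = n/\sqrt{\log\log n}$ from (\ref{eq_130}), which gives $\sqrt{n}/R_2 = \sqrt{\log\log n}/\sqrt{n}$. Using the trivial bound $\vphi \leq 1$ and absorbing the smaller term $C/(\sqrt{n}\log n)$ (which for large $n$ is dominated by $\sqrt{\log\log n}/\sqrt{n}$ up to a constant), we obtain the claimed bound
\[
\int_{t\xi + \xi^\perp} g \,\leq\, C\,\frac{\sqrt{\log\log n}}{\sqrt{n}}.
\]

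There is essentially no obstacle here: Lemma \ref{lem_955} has been engineered precisely so that this final estimate is immediate. The only small point of care is to notice that the one-dimensional marginal identity (\ref{eq_1600}) applies both to the ``$+$'' and ``$-$'' translates, and that applying Lemma \ref{lem_955} with $-t$ in place of $t$ handles the second family of terms by the evenness of $\vphi$. The whole step is essentially bookkeeping once the hard work of Proposition \ref{prop_612} and Lemma \ref{lem_955} has been carried out.
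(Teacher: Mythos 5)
Your proof is correct and follows exactly the same route as the paper: apply the marginal identity (\ref{eq_1600}) term by term, invoke Lemma \ref{lem_955} for both $t$ and $-t$ (or equivalently use the evenness of $\vphi$), bound $\vphi \le 1$, and substitute the values from (\ref{eq_130}). The bookkeeping with the smaller term $C/(\sqrt{n}\log n)$ is handled just as in the paper.
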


\begin{proof} By (\ref{eq_1600}) we have that
$$ \int_{t \xi + \xi^{\perp}} g
= \frac{1}{N_1 N_2} \sum_{i=1}^{N_1} \sum_{j=2}^{N_2}
\frac{\vphi \left( t + R_1 \xi \cdot \theta_i  + R_2 \xi \cdot \eta_j \right) + \vphi \left( -t + R_1 \xi \cdot \theta_i  + R_2 \xi \cdot \eta_j \right) }{2 \cdot \sqrt{2 \pi}}.  $$
Therefore, according to  Lemma \ref{lem_955} and as $\vphi \leq 1$,
$$ \int_{t \xi + \xi^{\perp}} g \leq \frac{C}{\sqrt{n} \log n} + C \frac{\sqrt{n}}{R_2} \leq \tilde{C} \frac{\sqrt{\log \log n}}{\sqrt{n}}, $$
completing the proof.
\end{proof}

\begin{proof}[{\bf Proof of Theorem \ref{thm_639}}] We set
$$ f(x) = g(x) \cdot 1_{4 K}(x) / \mu(4 K) $$
where $1_{4 K}$ is the function that equals one on $4 K$ and vanishes otherwise. Then $f$ is an even, continuous probability
density supported on $T = 4 K$. According  to Lemma \ref{lem_647},
$$ f(x) \leq 2 g(x) \qquad \text{for} \ x \in \RR^n. $$
Therefore, from Lemma \ref{lem_637}, for any $\xi \in S^{n-1}$ and $t \geq 0$,
\begin{equation}  \int_{T \cap (\xi^{\perp} + t \xi)} f(y) dy \leq 2 \int_{\xi^{\perp} + t \xi} g(y) dy \leq \frac{C \sqrt{\log \log n}}{\sqrt{n}}. \label{eq_648}
\end{equation}
We also know that $|T|^{1/n} \leq C$, according to Lemma \ref{cor_gluskin}. Therefore
the desired estimate (\ref{eq_649}) follows from (\ref{eq_648}).
\end{proof}

The left-hand side inequality in Theorem \ref{thm_530} clearly follows  from Theorem \ref{thm_639}.
Note also that Theorem \ref{thm_639} entails the optimality, up to a factor of $\log \log n$, of Corollary 1 from \cite{KZ}.

\section{Measures admitting tail bounds}

This section is devoted to the proof of Theorem \ref{thm_1026}, which is a modification of the proof of Theorem \ref{thm_639}.
We are given a dimension $n$ and $\alpha \in (0, 2]$. We may assume that $n \geq 10$
as otherwise the conclusion of the theorem is trivial.

\begin{lemma} Let $\Theta \in S^{n-1}$ be a random vector, distributed uniformly over $S^{n-1}$. Let $\xi \in S^{n-1}$ be a fixed unit vector.
Then,
$$ \PP \left( |\langle \Theta, \xi \rangle| \geq \frac{1}{\sqrt{n}} \right) \geq c \qquad \textrm{and} \qquad
\EE e^{ \left( \frac{\sqrt{n} |\langle \Theta, \xi \rangle|  + 1}{2} \right)^{\alpha} } \leq C_1, $$
where $c, C_1 > 0$ are universal constants.
 \label{lem_1705}
\end{lemma}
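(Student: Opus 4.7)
The plan is to reduce both claims to one-dimensional computations against the explicit density of $Z := \langle \Theta, \xi \rangle$. As in the proof of Lemma~\ref{lem_540}, $Z$ has density $\alpha_k (1-s^2)^{k/2}$ on $[-1,1]$ with $k = n-3$; the paper already established $\alpha_k \leq C \sqrt{n}$ in (\ref{eq_333}), and the complementary lower bound $\alpha_k \geq c \sqrt{n}$ follows instantly by comparing with the Gaussian envelope $(1-s^2)^{k/2} \leq e^{-ks^2/2}$ and integrating.

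For the first inequality, I would localize to the annulus $|s| \in [1/\sqrt{n}, 2/\sqrt{n}]$. On that set $(1-s^2)^{k/2} \geq (1 - 4/n)^{(n-3)/2} \geq c_0$, which uses $\log(1-x) \geq -2x$ for $x \in [0, 1/2]$ together with $n \geq 10$. Integrating the density over this annulus then yields
\[ \PP(|Z| \geq 1/\sqrt{n}) \geq 2 \alpha_k \cdot c_0/\sqrt{n} \geq c, \]
by the lower bound on $\alpha_k$.

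For the second inequality, set $W := (\sqrt{n} |Z| + 1)/2 \geq 1/2$. Since $W^\alpha \leq \max(1, W^2) \leq 1 + W^2$ for every $\alpha \in (0, 2]$, it suffices to bound $\EE e^{W^2}$ independently of $\alpha$. Expanding $W^2 = (n Z^2 + 2\sqrt{n}|Z| + 1)/4$, this expectation becomes an explicit integral of the form
\[ e^{1/4} \alpha_k \int_{-1}^1 e^{n s^2/4 + \sqrt{n} |s|/2} (1-s^2)^{(n-3)/2}\, ds. \]
Applying $(1-s^2)^{(n-3)/2} \leq e^{-(n-3)s^2/2}$ reduces the integrand to $\exp(-(n-6) s^2/4 + \sqrt{n} |s|/2)$, a Gaussian in $|s|$ with negative leading coefficient of order $n$. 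Completing the square, its maximum is $O(1)$ and the integral is of order $1/\sqrt{n}$, which exactly cancels the prefactor $\alpha_k = O(\sqrt{n})$ to leave a universal upper bound.

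The only subtle point is the endpoint $\alpha = 2$: the exponent $W^2$ nearly matches the Gaussian decay rate of the sphere's marginal, and the computation survives only because the factor $1/2$ in the definition of $W$ produces $n s^2/4$ inside the exponential rather than $n s^2$, leaving the negative quadratic coefficient $(n-6)/4$ after cancellation. For $\alpha < 2$ the argument has a comfortable margin, so reducing to the $\alpha=2$ case costs nothing.
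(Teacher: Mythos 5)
Your proof is correct and follows the same overall strategy as the paper: identify the marginal density of $Z = \langle \Theta, \xi\rangle$, bound the normalizing constant $\alpha_k$ on both sides by $\sqrt{n}$ up to constants, and reduce each claim to an explicit one-dimensional integral using the Gaussian envelope $(1-s^2)^{k/2} \le e^{-k s^2/2}$. Your treatment of the first inequality matches the paper's almost verbatim. For the second inequality you diverge in a small but pleasant way: rather than keeping $\alpha$ in the integrand and splitting the range at $|t|=7$ as the paper does (bounding the inner range crudely by $14 e^{4^\alpha}$ and controlling the tail via $((t+1)/2)^\alpha \le (4t/7)^2$), you dispense with $\alpha$ at the outset through the pointwise bound $W^\alpha \le 1 + W^2$ valid for $W \ge 1/2$ and $\alpha \in (0,2]$, and then complete the square once. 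This cleanly reduces to the worst case $\alpha = 2$, avoids the case split, and makes transparent exactly what the factor $1/2$ in $W = (\sqrt{n}|Z|+1)/2$ is doing: it ensures the residual Gaussian coefficient $-(n-6)/4$ stays negative of order $n$, which is the same cancellation the paper encodes in the numerical inequality $16/49 - 1/3 < 0$. Both arguments are correct; yours is a slight streamlining of the same idea.
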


\begin{proof} Denote $Z = \langle \Theta, \xi \rangle$.
As in the proof of Lemma \ref{lem_540}, the density of $Z$
in the interval $[-1,1]$ is proportional to $\beta_n (1-t^2)^{(n-3)/2}$,
where $\beta_n$ satisfies $c \sqrt{n} \leq \beta_n \leq C \sqrt{n}$. We need
to prove that
\begin{equation}  \sqrt{n} \int_{-1}^1 1_{ \{ |s| \geq 1/\sqrt{n} \}} \left( 1 - s^2 \right)^{\frac{n-3}{2}} ds \geq c
\quad \textrm{and} \quad \sqrt{n} \int_{-1}^1 e^{ \left( \frac{\sqrt{n} |s| + 1}{2} \right)^{\alpha} } \left( 1 - s^2 \right)^{\frac{n-3}{2}} ds \leq C_1 \label{eq_1014}
\end{equation}
The left-hand side inequality in (\ref{eq_1014}) follows from
$$ \int_{-1}^1 1_{ \{ |s| \geq 1/\sqrt{n} \}} \left( 1 - s^2 \right)^{\frac{n-3}{2}} ds \geq 2 \int_{1/\sqrt{n}}^{2/\sqrt{n}} \left( 1 - s^2 \right)^{n/2} ds \geq \frac{2}{\sqrt{n}} \cdot (1 - 4/n)^{n/2} \geq \frac{c}{\sqrt{n}}. $$
As for the right-hand side inequality in (\ref{eq_1014}), we argue as follows:
\begin{align*}
\sqrt{n} & \int_{-1}^1 e^{ \left( \frac{\sqrt{n} |s| + 1}{2} \right)^{\alpha} } \left( 1 - s^2 \right)^{\frac{n-3}{2}} ds \leq \sqrt{n} \int_{-\infty}^{\infty}
e^{ \left( \frac{\sqrt{n} |s| + 1}{2} \right)^{\alpha} } e^{- s^2 n /3} ds = \int_{-\infty}^{\infty} e^{ \left( \frac{|t| + 1}{2} \right)^{\alpha} } e^{- t^2 /3} dt
\\ & \leq 14 e^{4^\alpha} + 2 \int_7^{\infty} e^{ \left( \frac{t + 1}{2} \right)^{\alpha} } e^{- t^2 /3} dt \leq
14  e^{16} + 2 \int_0^{\infty} e^{ \left( \frac{4t}{7} \right)^{2} } e^{- t^2 /3} dt = 14 \cdot e^{16} + \sqrt{147  \pi}.
\tag*{\qedhere}
\end{align*}
 \end{proof}

Let us now  introduce the parameter $$ N = \lceil n^8 \cdot \exp( 8 n^{\alpha/2} ) ) \rceil. $$
Let $\Theta_1,\ldots, \Theta_N$ be  independent random vectors, distributed uniformly in $S^{n-1}$.

\begin{lemma} Assume that $n \geq \bar{C}$.
Then with probability of at least $1 - e^{-n^2}$ of selecting $\Theta_1,\ldots,\Theta_N$ the following holds: For all $\xi \in S^{n-1}$,
$$ \frac{1}{N} \sum_{i=1}^N |\langle \Theta_i, \xi \rangle| \geq \frac{c}{\sqrt{n}} \qquad \textrm{and} \qquad
\frac{1}{N} \sum_{i=1}^N e^{ \left( \sqrt{n} |\langle \Theta_i, \xi \rangle| / 2 \right)^{\alpha} } \leq C. $$
Here, $c, C, \bar{C} > 0$ are universal constants. \label{lem_403}
  \end{lemma}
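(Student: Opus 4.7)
The plan is to follow the template of the proof of Proposition \ref{prop_612}: establish the two bounds pointwise in $\xi$ using concentration inequalities applied to i.i.d.\ bounded random variables, then upgrade to a uniform statement over $S^{n-1}$ via a $\delta$-net argument. The crucial new feature here is that the summand $Y_i := e^{(\sqrt{n}|\langle \Theta_i, \xi\rangle|/2)^\alpha}$ takes values in $[1, M]$ with $M := e^{(\sqrt{n}/2)^\alpha} \leq e^{n^{\alpha/2}}$, which may be exponentially large in $n^{\alpha/2}$, and the definition $N = \lceil n^8 e^{8 n^{\alpha/2}} \rceil$ is calibrated specifically to beat this.

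For fixed $\xi \in S^{n-1}$ I would argue as follows. The lower bound: from Lemma \ref{lem_1705}, $\EE|\langle \Theta, \xi\rangle| \geq (1/\sqrt{n}) \cdot \PP(|\langle\Theta,\xi\rangle| \geq 1/\sqrt{n}) \geq c_0/\sqrt{n}$; since $|\langle \Theta_i, \xi\rangle| \in [0,1]$, Hoeffding's inequality for the lower tail gives
$$ \PP\!\left( \frac{1}{N}\sum_{i=1}^N |\langle \Theta_i,\xi\rangle| \leq \frac{c_0}{2\sqrt{n}}\right) \leq e^{-c_0^2 N/(2n)}. $$
The upper bound: since $(\sqrt{n}|z|/2)^\alpha \leq ((\sqrt{n}|z|+1)/2)^\alpha$, Lemma \ref{lem_1705} gives $\EE Y_i \leq C_1$. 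Rescaling by $M$ and applying Lemma \ref{lem_512} to $Y_i/M \in [0,1]$ with $p = C_1/M \leq 1$ yields
$$ \PP\!\left(\frac{1}{N}\sum_i Y_i \geq 3 C_1\right) \leq e^{-C_1 N/M} \leq \exp\!\left(-c_1 n^8 e^{7 n^{\alpha/2}}\right). $$

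Next I would upgrade both pointwise bounds to a uniform statement via $\delta$-nets. The lower-bound function $\xi \mapsto \frac{1}{N}\sum |\langle\Theta_i,\xi\rangle|$ is $1$-Lipschitz, so a net $\cF_1 \subseteq S^{n-1}$ at scale $c_0/(4\sqrt{n})$ with cardinality $\leq e^{C n \log n}$ suffices. For the upper-bound function, the chain $|e^a - e^b| \leq M|a-b|$ on $[0, \log M]$ together with $|s^\alpha - t^\alpha| \leq C_\alpha |s-t|^{\min(\alpha,1)} \cdot (\sqrt{n}/2)^\alpha$ for $s,t \in [0, \sqrt{n}/2]$ shows that $\xi \mapsto Y_i(\xi)$ is H\"older continuous of order $\min(\alpha, 1)$ with constant $\leq C_\alpha M n^{\alpha/2}$. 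A net $\cF_2$ at scale $\delta_2 = (C_1/(C_\alpha M n^{\alpha/2}))^{1/\min(\alpha,1)}$ therefore has cardinality $\#\cF_2 \leq e^{\tilde C_\alpha n^{1+\alpha/2}}$. Union-bounding over each net and combining with the pointwise tails produces failure probability at most $e^{-n^2}/2$ for each event, using that $N/n \geq n^7 e^{8 n^{\alpha/2}}$ dominates $\log \#\cF_1 + n^2$ and $N/M \geq n^8 e^{7 n^{\alpha/2}}$ dominates $\log \#\cF_2 + n^2$ once $n \geq \bar C$.

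The main technical point, as in Proposition \ref{prop_612} but sharper here, is the interplay between the exponential growth of $M$ and the choice of $N$: the H\"older constant of $\xi \mapsto Y_i(\xi)$ scales like $M$, which forces $\log(1/\delta_2) \gtrsim n^{\alpha/2}$ and hence $\log \#\cF_2 \gtrsim n^{1+\alpha/2}$, while the concentration inequality contributes a saving of only $\exp(-cN/M)$. The specific choice $N = \lceil n^8 e^{8 n^{\alpha/2}} \rceil$ in the lemma's hypothesis is tailored so that $N/M \geq n^8 e^{7 n^{\alpha/2}}$ comfortably dominates both this net entropy and the target $n^2$, with room to spare. The analogous obstacle for the lower bound is already resolved by any $N$ polynomial in $n$, so no tension arises there.
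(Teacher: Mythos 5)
Your proposal is correct and follows essentially the same strategy as the paper: pointwise concentration for each fixed $\xi$ (via a bounded-variable Chernoff bound applied to a rescaled version of the summand, together with the expectation estimates from Lemma \ref{lem_1705}), followed by a $\delta$-net union bound over $S^{n-1}$ exploiting the Lipschitz/H\"older regularity in $\xi$, with the choice of $N$ calibrated to overwhelm both the exponentially large Lipschitz constant and the net entropy. The only cosmetic differences are that the paper rescales $Z_i$ by $\sqrt{N}$ rather than by $M=e^{(\sqrt n/2)^\alpha}$ (obtaining the tail $e^{-C_3\sqrt N}$ instead of your $e^{-C_1 N/M}$, both of which suffice), uses a direct derivative computation in place of your chain-rule H\"older estimate, and passes through the indicator variables $1_{\{|\langle\Theta_i,\xi\rangle|\ge 1/\sqrt n\}}$ rather than Hoeffding for the lower bound; your stated H\"older constant $C_\alpha M n^{\alpha/2}$ is off by a harmless polynomial factor of $\sqrt{n}$ (tracking the $|u-u'|^{\min(\alpha,1)}$ step gives roughly $M n^{(\alpha+1)/2}$), but this does not affect the argument since $N/M$ grows super-polynomially.
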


\begin{proof} The constant $\bar{C} > 10$ will be a sufficiently large universal constant whose value will be determined later on.
Fix $\xi \in S^{n-1}$ and set $$ Y_i = 1_{ \{ |\langle \Theta_i, \xi \rangle| \geq 1/ \sqrt{n} \} } \qquad \qquad (i=1,\ldots,N). $$
Then $Y_1,\ldots,Y_N$ are independent, identically-distributed random variables attaining values in $\{0, 1\}$, with $\PP(Y_i = 1) \geq c$
according to Lemma \ref{lem_1705}. By a standard estimate for the binomial distribution (see, e.g., \cite[Chapter 2]{BLM}),
$$
\PP \left( \frac{1}{N} \sum_{i=1}^N Y_i \geq c / 2 \right) \geq 1 - C_2 e^{-c_1 N}.
$$
Consequently, for any fixed $\xi \in S^{n-1}$,
\begin{equation}
\PP \left( \frac{1}{N} \sum_{i=1}^N |\langle \Theta_i, \xi \rangle| \geq \frac{c_2}{\sqrt{n}} \right) \geq
\PP \left( \frac{1}{N} \sum_{i=1}^N Y_i \geq c / 2 \right) \geq 1 - C_2 e^{-c_1 N}.
\label{eq_1159}
\end{equation}
Next, set $$ Z_i = \exp \left( \left( \frac{\sqrt{n} |\langle \Theta_i, \xi \rangle| + 1}{2} \right)^{\alpha} \right) \qquad \qquad (i=1,\ldots,N). $$
Then $1 \leq Z_i \leq \exp(n^{\alpha/2}) \leq \sqrt{N}$ while $\EE Z_i \leq C_3$ according to Lemma \ref{lem_1705}.
We may thus use Lemma \ref{lem_512} and conclude that for any fixed $\xi \in S^{n-1}$,
\begin{equation}  \PP \left( \frac{1}{N} \sum_{i=1}^N Z_i \leq 3 C_3 \right) = \PP \left( \frac{1}{N} \sum_{i=1}^N \frac{Z_i}{\sqrt{N}}  \leq \frac{3 C_3}{\sqrt{N}}  \right) \geq 1 - e^{-N C_3 / \sqrt{N}} = 1 - e^{-C_3 \sqrt{N}}.
\label{eq_1211} \end{equation}
We now select the universal constant $\bar{C} > 10$ large enough so that the assumption $n \geq \bar{C}$ implies
\begin{equation}   C_2 e^{-c_1 n^8} + e^{-C_3 n^4} \leq e^{-50 n^2} \qquad \textrm{and} \qquad \frac{\sqrt{n}}{c_2} \leq \frac{n^2}{2} \label{eq_228} \end{equation}
where $c_1, c_2, C_2$ are the constants from (\ref{eq_1159}) while $C_3$ is the constant from (\ref{eq_1211}).
Consider the functions
$$ f(t) = t \qquad \textrm{and} \qquad  g(t) = \exp \left \{ \left( \frac{\sqrt{n} t + 1}{2} \right) ^{\alpha} \right \} \qquad \qquad \textrm{for} \ 0 \leq t \leq 1. $$
Clearly $f$ is a $1$-Lipschitz function, while for any $0 \leq t \leq 1$,
$$ |g^{\prime}(t)| = g(t) \cdot \frac{\alpha \sqrt{n}}{2} \cdot \left( \frac{\sqrt{n} t + 1}{2} \right) ^{\alpha-1} \leq
2\sqrt{n} \cdot \exp \left \{ 2 \left( \frac{\sqrt{n} t + 1}{2} \right) ^{\alpha} \right \} \leq n e^{2 n^{\alpha/2}} \leq N^{1/4},
$$
where we used the elementary inequality $x^{\alpha-1} e^{x^{\alpha}} \leq 2 e^{2 x^{\alpha}}$ with $x = (\sqrt{n} t + 1) / 2 \geq 1/2$.
Hence $g$ is an $N^{1/4}$-Lipschitz function on $S^{n-1}$. Set $\delta = N^{-1/3}$ and let $\cF \subseteq S^{n-1}$
be a $\delta$-net of cardinality
$$  \#(\cF) \leq \left( \frac{5}{\delta} \right)^n \leq e^{6 n \log (N^{1/3}) } = N^{2n} \leq (n e^n + 1)^{16 n} \leq e^{40 n^2}. $$
From (\ref{eq_1159}), (\ref{eq_1211}) and (\ref{eq_228}), with probability of at least $1 - e^{-10 n^2}$ of selecting $\Theta_1,\ldots,\Theta_N$,
for all $\xi \in \cF$,
\begin{equation} \frac{1}{N} \sum_{i=1}^N |\langle \Theta_i, \xi \rangle| \geq \frac{c_2}{\sqrt{n}} \qquad \textrm{and} \qquad
\frac{1}{N} \sum_{i=1}^N \exp \left \{ \left( \frac{\sqrt{n} |\langle \Theta_i, \xi \rangle| + 1}{2} \right)^{\alpha} \right \} \leq 3 C_3. \label{eq_236}
\end{equation}
Since $\cF$ is a $\delta$-net, from (\ref{eq_236}) and the Lipschitz properties of $f$ and $g$ we obtain that with probability of at least $1 - e^{-10 n^2}$,
for all $\xi \in S^{n-1}$,
\begin{equation}
\frac{1}{N} \sum_{i=1}^N |\langle \Theta_i, \xi \rangle| \geq \frac{c_2}{\sqrt{n}} - \delta \qquad \textrm{and} \qquad
\frac{1}{N} \sum_{i=1}^N \exp \left \{ \left( \frac{\sqrt{n} |\langle \Theta_i, \xi \rangle| + 1}{2} \right)^{\alpha} \right \} \leq 3 C_3 + N^{1/4} \delta.
\label{eq_239} \end{equation}
However, $\delta = N^{-1/3} \leq \frac{1}{n^2} \leq \frac{c_2}{2 \sqrt{n}}$ according to (\ref{eq_228}), while $N^{1/4} \delta \leq 1$.
Hence the conclusion of the lemma follows from (\ref{eq_239}).
\end{proof}

We define the centrally-symmetric convex body $K \subseteq \RR^n$ to be the convex hull of the $2N + 2n$ points
$$ \pm R \Theta_1,\ldots, \pm R \Theta_{N}, \pm n e_1,\ldots, \pm n e_n, $$
where
$$ R = n^{1 -  \alpha / 4}. $$
From now on in this paper, we write $c, C, C_1, \tilde{C}$ etc. for various positive constants that depend solely on
$\alpha \in (0,2]$.

\begin{lemma} Assume that $n \geq \bar{C}$. Then with probability one, $\displaystyle |K| \leq C^n$, where $C, \bar{C} > 0$ depend solely on $\alpha$. \label{cor_gluskin2}
\end{lemma}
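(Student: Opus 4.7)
The plan is to repeat the Gluskin-style argument of Lemma \ref{cor_gluskin} with the new parameters in place, using the Khatri-Sidak correlation inequality and the Santalo inequality. For a standard Gaussian $Z$ in $\RR^n$ and a parameter $t > 0$ to be chosen, Khatri-Sidak applied to the slabs defining $t K^{\circ}$ yields
$$
\PP\bigl(Z \in t K^{\circ}\bigr) \;\geq\; \prod_{i=1}^N \PP\bigl(|R\, Z \cdot \Theta_i| \leq t\bigr) \cdot \prod_{k=1}^n \PP\bigl(|n\, Z \cdot e_k| \leq t\bigr).
$$
Since each $Z \cdot \Theta_i$ is a standard Gaussian, every factor in the first product equals $\PP(|Z_1| \leq t/R) \geq 1 - \vphi(t/R)$, and every factor in the second product equals $\PP(|Z_1| \leq t/n)$.

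Next, I would choose $t = C_0 n$ with $C_0 = C_0(\alpha)$ a sufficiently large constant. Then $t/R = C_0 n^{\alpha/4}$, so $\vphi(t/R) = \exp(-C_0^2 n^{\alpha/2}/2)$. Taking $C_0^2 \geq 20$ and recalling $N \leq 2 n^8 \exp(8 n^{\alpha/2})$, we get
$$
\bigl(1 - \vphi(t/R)\bigr)^N \;\geq\; \exp\bigl(-2 N \vphi(t/R)\bigr) \;\geq\; \exp(-\tilde{c} n)
$$
for $n$ larger than some $\bar{C}(\alpha)$. At the same time, each cube factor $\PP(|n Z \cdot e_k| \leq t) = \PP(|Z_1| \leq C_0)$ is a positive constant depending only on $\alpha$, so their product exceeds $c_0^n$. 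Combining the two estimates gives $\gamma_n(t K^{\circ}) \geq \exp(-C n)$.

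To conclude, I would use that the Gaussian density is bounded by $(2 \pi)^{-n/2}$, which upgrades the above to $|t K^{\circ}| \geq c^n$, equivalently $|K^{\circ}| \geq (c/(C_0 n))^n$. The Santalo inequality $|K| \cdot |K^{\circ}| \leq |B_2^n|^2 \leq (C/n)^n$ then produces $|K| \leq C_1^n$ with $C_1$ depending only on $\alpha$, as required. The bound is entirely deterministic in $\Theta_1,\ldots,\Theta_N$, using only that they are unit vectors, which is why it holds with probability one. I expect the only delicate bookkeeping to be the balancing of $R = n^{1-\alpha/4}$ against $\log N \asymp n^{\alpha/2}$ so that $t$ can be chosen of order $n$; this is the whole point of the definitions of $R$ and $N$, so the obstacle is really just cosmetic.
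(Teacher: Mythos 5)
Your proposal reproduces the paper's argument essentially verbatim: Khatri--Sidak applied to the slabs defining a dilate of $K^{\circ}$ of order $n$, the observation that $\vphi(C_0 n/R) = \exp(-C_0^2 n^{\alpha/2}/2)$ decays fast enough to dominate $N \le 2n^8 \exp(8n^{\alpha/2})$, and then the Gaussian density bound plus Santaló to pass from $\gamma_n(t K^\circ)$ to $|K|$. The paper simply fixes $t = 10n$ (so $C_0 = 10$) rather than carrying an abstract $C_0$, but the calculation and conclusion are identical.
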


\begin{proof} As in the proof of Corollary \ref{cor_gluskin2}, it suffices to show that $\gamma_n( 10 n K^{\circ} ) \geq c^n$.
Let $Z$ be a standard Gaussian random vector in $\RR^n$, independent of the $\Theta_i$'s. By the Khatri-Sidak lemma,
\begin{align*}  \gamma_n( 10 & n K^{\circ} )  = \PP \left( \forall i,j \quad  |\langle Z, \Theta_i \rangle| \leq 10n/ R \ \ \textrm{and} \ \  |\langle Z, e_j \rangle| \leq 10 \right) \\ & \geq \left(1 - \vphi(10n/R) \right)^N \cdot (1 - \vphi(10))^n
\geq \left(1 - e^{-50 n^{\alpha/2} } \right)^{n^8 \exp(8 n^{\alpha/2}) + 1} \cdot (1 - \exp(-c))^n.
\end{align*}
However, $n^8 \exp(8 n^{\alpha/2}) + 1 \leq C \cdot e^{10 n^{\alpha/2}}$ where $C > 0$ depends solely on $\alpha$.
Moreover, $e^{10 n^{\alpha/2}} \geq 2$ assuming that $n \geq \bar{C}$ for some $\bar{C} > 0$ depending on $\alpha$.
Hence,
$$ \gamma_n( 10 n K^{\circ} ) \geq \left(1 - e^{-50 n^{\alpha/2} } \right)^{ C \cdot e^{10 n^{\alpha/2}} } \cdot c^n \geq \tilde{c}^n, $$
where $\tilde{c} > 0$ depends only on $\alpha$.
\end{proof}

We define the probability measure $\mu$ to be the convolution $ \mu = \gamma_n * \nu $
where
$$ \nu = \frac{1}{N} \sum_{i=1}^{N} \frac{\delta_{R \Theta_i} + \delta_{-R \Theta_i }}{2}. $$

\begin{lemma} Assume that $n \geq \bar{C}$. Then with probability one, $\displaystyle \mu(3 K) \geq 1 - C \exp(-c n) \geq 1/2$.
\label{lem_1527}
\end{lemma}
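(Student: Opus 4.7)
The plan is to use the convolution structure of $\mu$ and reduce the statement to a standard Gaussian concentration bound; it will hold deterministically in the $\Theta_i$'s, which gives the ``probability one'' part of the conclusion. First I would write, by Fubini,
$$ \mu(3K) = \int_{\RR^n} \gamma_n(3K - y)\, d\nu(y), $$
and note that $\nu$ is supported on the $2N$ points $\pm R \Theta_i$, each of which lies in $K$ by the very definition of $K$ as the convex hull of these points together with $\pm n e_j$.

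Next I would use the symmetry and convexity of $K$ to push mass off the support of $\nu$. Since $K = -K$, any $y = \pm R \Theta_i$ satisfies $y \in K$, hence $0 \in K - y$, and therefore
$$ 3K - y = 2K + (K-y) \supseteq 2K. $$
Consequently $\gamma_n(3K - y) \geq \gamma_n(2K)$ for every $y$ in the support of $\nu$, and so $\mu(3K) \geq \gamma_n(2K)$, regardless of the realization of the $\Theta_i$'s.

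The remaining step is to produce a Euclidean ball inside $2K$ on which Gaussian concentration of the norm can be applied. Since the vertices $\pm n e_1,\ldots,\pm n e_n$ all lie in $K$, the cross-polytope $n B_1^n = \{x : \sum_i |x_i| \leq n \}$ is contained in $K$, and by Cauchy--Schwarz this cross-polytope contains $\sqrt{n} B^n$. Therefore $2K \supseteq 2\sqrt{n} B^n$, and it suffices to bound $\gamma_n(2 \sqrt{n} B^n)$ from below. Using that $x \mapsto |x|$ is $1$-Lipschitz together with Gaussian concentration (or the stronger bound $\gamma_n(\{|x| \geq \sqrt{n}+t\}) \leq e^{-t^2/2}$ for $t \geq 0$) applied with $t = \sqrt{n}$, I would conclude
$$ \mu(3K) \geq \gamma_n(2K) \geq \gamma_n(2\sqrt{n} B^n) \geq 1 - e^{-n/2} \geq 1 - C e^{-cn}, $$
and the lower bound $1/2$ follows once $n \geq \bar{C}$ for a suitably large absolute $\bar{C}$. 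There is no real obstacle here: the key design feature of $K$ is that it simultaneously contains the atoms of $\nu$ (absorbing translations needed to replace $3K$ by $2K$) and the cube-like vertices $\pm n e_j$ (providing the inscribed Euclidean ball that swallows the Gaussian smoothing), so the only nontrivial input is concentration of the Gaussian norm.
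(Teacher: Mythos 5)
Your proof is correct and follows essentially the same route as the paper's: observe that the atoms $\pm R\Theta_i$ of $\nu$ lie in $K$, that $\pm n e_j \in K$ forces $\sqrt{n}B^n \subseteq K$, use convexity to translate a ball of radius $2\sqrt{n}$ into $3K$ around each atom, and finish with Gaussian concentration of $|x|$. The only cosmetic difference is that you first note $3K - y \supseteq 2K$ for $y$ in the support of $\nu$ and then pass to $2\sqrt{n}B^n$, whereas the paper directly records $3K \supseteq \pm R\Theta_i + 2\sqrt{n}B^n$; the underlying argument is identical.
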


\begin{proof} Note that $K \supseteq \sqrt{n} B^n$ as $\pm n e_1,\ldots, \pm n e_n \in K$.
Consequently, the convex body $3  K$ contains the set $\pm R \Theta_i + 2 \sqrt{n} B^n$
for any $i=1,\ldots,N$. As in the proof of Lemma \ref{lem_647}, the measure $\mu$ is a mixture of translates of $\gamma_n$, each centered at a point
of the form $\pm R \Theta_i$. Therefore $\mu(3 K)$ is at least $\gamma_n(2 \sqrt{n} B^n)$ which in turn is at least $1 - C \exp(-c n) \geq 1/2$
by a standard estimate.
\end{proof}

\begin{lemma} Assume that $n \geq \bar{C}$.
Then with probability of at least  $1 - e^{-n^2}$, the measure $\mu$ admits $\psi_{\alpha}$-tails with parameters $(C, c)$.
Here, $c, C, \bar{C} > 0$ depend only on $\alpha$. \label{lem_529}
\end{lemma}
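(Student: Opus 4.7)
The plan is to work throughout on the event of probability at least $1 - e^{-n^2}$ guaranteed by Lemma \ref{lem_403}, on which the two uniform estimates
$$\frac{1}{N}\sum_{i=1}^N |\langle \Theta_i, \xi\rangle| \geq \frac{c}{\sqrt{n}} \qquad \textrm{and} \qquad \frac{1}{N}\sum_{i=1}^N \exp\left(\left(\sqrt{n}|\langle \Theta_i, \xi\rangle|/2\right)^\alpha\right) \leq C$$
hold simultaneously for every $\xi \in S^{n-1}$. Fix such a $\xi$, abbreviate $T_i = \langle \Theta_i, \xi\rangle$, and draw $X \sim \mu$ as $X = Z + W$ with $Z \sim \gamma_n$ independent of $W \sim \nu$; note $\langle W, \xi\rangle$ takes each of the values $\pm R T_i$ with probability $1/(2N)$.

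First I would produce a lower bound on $E := \EE |\langle X, \xi\rangle|$. Conditioning on $W$ and applying Jensen's inequality to the Gaussian variable $\langle Z, \xi\rangle \sim N(0,1)$ gives $\EE_Z|\langle Z,\xi\rangle + w| \geq |w|$ for every $w$, hence
$$E \geq \EE |\langle W, \xi\rangle| = \frac{R}{N}\sum_{i=1}^N |T_i| \geq \frac{c R}{\sqrt n} = c\, n^{1/2 - \alpha/4}.$$
This is the single place where the first estimate of Lemma \ref{lem_403} is used, and the calibration $R = n^{1-\alpha/4}$ is tuned precisely so that $\sqrt{n}\,E/R \geq c$ — a fact that drives the entire argument.

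Next I would split the tail by the triangle inequality:
$$\mu\bigl(\{|\langle x,\xi\rangle| \geq tE\}\bigr) \leq \PP\bigl(|\langle Z,\xi\rangle| \geq tE/2\bigr) + \PP\bigl(|\langle W,\xi\rangle| \geq tE/2\bigr).$$
For the Gaussian term, the standard bound gives $2\exp(-t^2 E^2/8)$; using $E^2 \geq c^2 n^{1-\alpha/2} \geq c^2$ and $t^2 \geq t^\alpha$ for $t \geq 1$ (recall $\alpha \leq 2$), this is at most $2\exp(-c_1 t^\alpha)$. For the discrete term, Markov's inequality applied to the exponential moment yields
$$\PP(R|T_I| \geq tE/2) = \frac{1}{N}\#\bigl\{i : |T_i| \geq tE/(2R)\bigr\} \leq \exp\bigl(-(\sqrt n\, tE/(4R))^\alpha\bigr) \cdot \frac{1}{N}\sum_i e^{(\sqrt n |T_i|/2)^\alpha},$$
and the second factor is bounded by $C$ by Lemma \ref{lem_403}, while $\sqrt n E/R \geq c$ turns the first factor into $\exp(-c_2 t^\alpha)$. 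Summing the two pieces gives the $\psi_\alpha$-tail bound for all $t \geq 1$; for $0 < t < 1$ the trivial estimate $\mu(\cdot) \leq 1$ is absorbed into the parameter $\beta$ by taking $\beta \geq e^\gamma$.

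The main conceptual obstacle, which is very mild here, is ensuring that the two tail contributions — one coming from a Gaussian of subgaussian type $\psi_2$ and one from the empirical $\psi_\alpha$ moment of the atoms — combine into a single clean $\psi_\alpha$ bound with the same $t^\alpha$ exponent. This works because the parameter $R$ is chosen exactly to balance the scales: it is large enough that the random walk $W$ dominates $Z$ at scale $E$ (forcing the Gaussian contribution to cause no loss) and small enough that Lemma \ref{lem_403} can be invoked with the sphere concentration scale $\sqrt n$. No further nontrivial ingredient is needed beyond Lemma \ref{lem_403}, Markov's inequality, and the Gaussian tail bound.
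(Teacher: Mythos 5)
Your proposal is correct and follows essentially the same route as the paper: bound $E_\xi$ from below by $cR/\sqrt{n}$ using the first estimate of Lemma \ref{lem_403} (the paper does this via the inequality $\frac{1}{2}(|t-a|+|t+a|)\geq |a|$, you via Jensen — the same fact), then split the tail of $\mu=\gamma_n*\nu$ into the Gaussian part (handled by $E_\xi\geq c$ and $t^2\geq t^\alpha$ for $t\geq 1$) and the atomic part (handled by Markov against the empirical exponential moment from Lemma \ref{lem_403}). The paper phrases the atomic step as ``a standard application of Markov--Chebyshev based on (\ref{eq_508})'' with a reference to \cite{BGVV}; you carry it out explicitly, and you also spell out the absorption of the small-$t$ regime into $\beta$, which the paper leaves implicit. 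No substantive difference.
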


\begin{proof} We will assume that the event described in Lemma \ref{lem_403} holds true, which
happens with probability of at least  $1 - e^{-n^2}$. We need to show that for any $\xi \in S^{n-1}$, setting $E_{\xi} = \int_{\RR^n} |\langle x, \xi \rangle| d \mu(x)$, we have
\begin{equation}
\mu \left( \left \{ x \in \RR^n \, ; \, |\langle \xi, x \rangle| \geq t E_{\xi} \right \} \right) \leq C \exp(-c t^{\alpha}). \qquad \qquad \textrm{for} \ t > 0. \label{eq_431}
\end{equation}
Since the event described in Lemma \ref{lem_403} holds true, we know that
for any $\xi \in S^{n-1}$,
\begin{align}  \nonumber E_{\xi} & = \int_{\RR^n}  |\langle x, \xi \rangle| d \mu(x)  = \frac{1}{N} \sum_{i=1}^N \frac{1}{\sqrt{2 \pi}} \int_{-\infty}^{\infty}
\frac{|t - \langle \xi, R \Theta_i \rangle| + |t + \langle \xi, R \Theta_i \rangle|}{2} e^{-t^2/2} dt \\ & \geq   \frac{1}{N} \sum_{i=1}^N |\langle \xi, R \Theta_i \rangle| \geq c \frac{R}{\sqrt{n}}. \nonumber  \end{align}
Therefore,
\begin{align} \nonumber \int_{\RR^n} \exp \left \{ \left( c_1 \frac{ |\langle \xi, x \rangle|}{E_{\xi}} \right)^{\alpha} \right \} d \nu(x) & =
\frac{1}{N} \sum_{i=1}^N \exp \left \{ \left( c_1 \frac{ |\langle \xi, R \Theta_i \rangle|}{E_{\xi}} \right)^{\alpha} \right \}  \\ & \leq
\frac{1}{N} \sum_{i=1}^N \exp \left \{ \left(  \frac{ \sqrt{n} |\langle \xi,  \Theta_i \rangle|}{2} \right)^{\alpha} \right \} \leq C. \label{eq_508}
\end{align}
A standard application of the Markov-Chebyshev inequality based on (\ref{eq_508}), which appears e.g. in \cite[Section 2.4]{BGVV}, shows that for any $\xi \in S^{n-1}$ and $t > 0$,
\begin{equation}  \nu \left( \left \{ x \in \RR^n \, ; \, |\langle \xi, x \rangle| \geq t E_{\xi} \right \} \right) \leq  C \exp(-c t^{\alpha}). \label{eq_508_}
\end{equation}
Since $E_{\xi} \geq c R / \sqrt{n} \geq c$, we know that for any $\xi \in S^{n-1}$ and $t > 0$,
\begin{equation}  \gamma_n \left( \left \{ x \in \RR^n \, ; \, |\langle \xi, x \rangle| \geq t E_{\xi} \right \} \right) \leq
\gamma_n \left( \left \{ x \in \RR^n \, ; \, |\langle \xi, x \rangle| \geq c t \right \} \right)
\leq \tilde{C} e^{-\tilde{c} t^2}  \leq \tilde{C} e^{-\hat{c} t^{\alpha}}. \label{eq_511}
\end{equation}
Since $\mu = \gamma_n * \nu$, we deduce (\ref{eq_431}) from (\ref{eq_508_}) and (\ref{eq_511}).
\end{proof}

Write $g$ for the continuous density of the measure $\mu$. Thus
$$ g(x) = \frac{1}{N} \sum_{i=1}^{N} \frac{\vphi_n(x + R \Theta_i ) +
\vphi_n(x - R \Theta_i)}{2 \cdot (2 \pi)^{n/2}} \qquad \qquad (x \in \RR^n). $$

\begin{lemma} Assume that $n \geq \bar{C}$. Then with probability of at least $1 - n^{-n}$ the following holds:
For any $\xi \in S^{n-1}$ and $t \geq 0$,
$$ \int_{t \xi + \xi^{\perp}} g \leq \frac{C}{n^{(2 - \alpha)/4}}, $$
where $C, \bar{C} > 0$ depend only on $\alpha$. \label{lem_1533}
\end{lemma}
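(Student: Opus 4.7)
The plan is to adapt the argument of Proposition \ref{prop_612}, exploiting the fact that here the sample size $N = \lceil n^8 \exp(8 n^{\alpha/2})\rceil$ is much larger than in the earlier construction, which makes the net-union bound essentially free. By (\ref{eq_1600}),
\[
\int_{t \xi + \xi^{\perp}} g
= \frac{1}{N} \sum_{i=1}^N \frac{\vphi(t + R \xi \cdot \Theta_i) + \vphi(-t + R \xi \cdot \Theta_i)}{2\sqrt{2\pi}},
\]
so it suffices to show that with probability at least $1 - n^{-n}$, for every $\xi \in S^{n-1}$ and every $t \in \RR$,
\[
G(t,\xi) := \frac{1}{N}\sum_{i=1}^N \vphi\bigl(t + R\,\xi \cdot \Theta_i\bigr) \leq C \cdot \frac{\sqrt{n}}{R} = \frac{C}{n^{(2-\alpha)/4}}.
\]
Since $R = n^{1 - \alpha/4} \geq \sqrt{n}$, Lemma \ref{lem_540} applies and gives $\EE\, \vphi(t + R \Theta \cdot \xi) \leq C \frac{\sqrt{n}}{R} \vphi(c \sqrt{n} t / R)$, which is in particular bounded by $C \sqrt{n}/R$.

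Next, I would apply Corollary \ref{cor_554} pointwise in $(t,\xi)$ with the choice $\alpha_0 = \sqrt{n}/R$, obtaining for each fixed $\xi \in S^{n-1}$ and $t \in \RR$,
\[
\PP\!\left(G(t,\xi) \geq C\max\!\left\{\alpha_0, \tfrac{\sqrt{n}}{R}\vphi(c\sqrt{n} t /R)\right\}\right) \leq e^{-N \alpha_0} \leq \exp\!\bigl(-n^{7}\,e^{8 n^{\alpha/2}}\bigr),
\]
so the one-sided deviation bound is actually double-exponentially small in $n$. To make this uniform in $(t,\xi)$, I would use that $G$ is $(R+1)$-Lipschitz on $\RR \times S^{n-1}$ (since $\vphi$ is $1$-Lipschitz), choose $\delta = n^{-3} R^{-2}$ or similar, pick a $\delta$-net $\cF \subseteq S^{n-1}$ of cardinality at most $(5/\delta)^n \leq e^{C n \log n}$, and discretize $t \in [-n^3,n^3]$ on a grid of spacing $\delta$ to obtain an index set $I$ of polynomial size. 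The union bound over $\cF \times I$ then succeeds with failure probability at most $e^{C n \log n} \cdot e^{-N \sqrt{n}/R} \leq n^{-n}$, because $N \sqrt{n}/R$ dwarfs any polynomial of $n \log n$ thanks to the factor $e^{8 n^{\alpha/2}}$ in $N$.

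For $|t| > n^3$ the bound is trivial, since in that regime every summand is at most $\vphi(n^3 - n) \leq e^{-n}$. Combining the on-net estimate with the Lipschitz approximation (the cross term $(R+1)\delta$ is dominated by $\sqrt{n}/R$ by our choice of $\delta$) yields the claim. The only real obstacle is bookkeeping the parameters so that simultaneously (i) the Lipschitz correction $(R+1)\delta$ is at most $\sqrt{n}/R$, (ii) the net cardinality is at most $e^{O(n \log n)}$, and (iii) $N\alpha_0 \gg n \log n + n \log n$ for the union bound; all three are comfortably satisfied because $R \leq n$ and $N \geq e^{8 n^{\alpha/2}}$, provided $n$ is larger than a constant $\bar{C}$ depending only on $\alpha$.
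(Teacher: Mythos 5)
Your proof is correct and rests on exactly the same ideas as the paper's (Lemma~\ref{lem_540}, Corollary~\ref{cor_554}, a $\delta$-net and a union bound), but you re-derive the net argument from scratch rather than noticing that Proposition~\ref{prop_612} already applies verbatim. Since $R = n^{1-\alpha/4}$ satisfies $\sqrt{n} \leq R \leq n$ for $\alpha \in (0,2]$, and $N = \lceil n^8 e^{8n^{\alpha/2}}\rceil \geq 10\,n\log n$ once $n \geq \bar C$, the paper simply invokes Proposition~\ref{prop_612} to get, with probability $\geq 1 - n^{-n}$, the bound
\[
\frac{1}{N}\sum_{i=1}^N \vphi\bigl(t + R\,\xi\cdot\Theta_i\bigr) \leq \frac{Cn\log n}{\min\{N,n^3\}} + C\frac{\sqrt n}{R}\,\vphi\!\left(\frac{c\sqrt n}{R}\,t\right) \leq \frac{C\log n}{n^2} + \frac{C}{n^{(2-\alpha)/4}} \leq \frac{\tilde C}{n^{(2-\alpha)/4}},
\]
and then converts to $\int_{t\xi+\xi^\perp} g$ via~(\ref{eq_1600}). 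Your route reaches the same estimate but spends a page reproving the proposition with slightly different parameter choices ($\alpha_0 = \sqrt n / R$, $\delta = n^{-3}R^{-2}$). Both are sound; directly citing the proposition is cleaner, and it was indeed stated with the flexible $\min\{N,n^3\}$ and the range $\sqrt n \leq R \leq n$ precisely so that it could be reused here.
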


\begin{proof} We may assume that $\bar{C} \geq 5$ is large enough so that the assumption
$n \geq \bar{C}$ implies that $10 n \log n \leq N \leq e^{10 n}$. We may thus apply
Proposition \ref{prop_612}. According to the conclusion of this proposition, with probability of at least $1 - n^{-n}$ the following holds: For all $\xi \in S^{n-1}$ and $t \in \RR$,
\begin{equation}\label{eq_1601}
 \frac{1}{N} \sum_{i=1}^N \vphi \left( t + R \xi \cdot \Theta_i   \right) \leq \frac{C n \log n}{\min \{N, n^3\}} \, + \, C
\frac{\sqrt{n}}{R} \leq \frac{\tilde{C}}{ n^{(2 - \alpha) /4} },
\end{equation}
where we recall that $\tilde{C} = \tilde{C}(\alpha)$ depends solely on $\alpha \in (0, 2]$. Consequently, for all $\xi \in S^{n-1}$ and $t \geq 0$
we may use (\ref{eq_1600}) and obtain
\begin{equation*}
 \int_{t \xi + \xi^{\perp}} g
= \frac{1}{N} \sum_{i=1}^{N}
\frac{\vphi \left( t + R \xi \cdot \Theta_i  \right) + \vphi \left( R \xi \cdot \Theta_i - t \right) }{2 \cdot \sqrt{2 \pi}} \leq \frac{C}{n^{(2 - \alpha)/4}}. \tag*{\qedhere} \end{equation*}
\end{proof}

It is well-known (see, e.g., \cite[Section 2.4]{BGVV}) that if the probability measure $\mu$ admits $\psi_{\alpha}$-tails with parameters $(\beta, \gamma)$,
then the following reverse H\"older inequality holds true: for any $\xi \in S^{n-1}$,
\begin{equation}
\sqrt{ \int_{\RR^n} |\langle x, \xi \rangle|^2 d \mu(x) } \leq C_{\beta, \gamma} \int_{\RR^n} |\langle x, \xi \rangle| d \mu(x),
\label{eq_605}
\end{equation}
where $C_{\beta, \gamma} > 0$ depends only on $\beta$ and $\gamma$.

\begin{proof}[Proof of Theorem \ref{thm_1026}] We may assume that $n \geq \bar{C}$,
as otherwise the conclusion of the theorem is trivial. We may fix $\Theta_1,\ldots, \Theta_N \in S^{n-1}$
such that the events described in Lemma \ref{cor_gluskin2}, Lemma \ref{lem_1527}, Lemma \ref{lem_529}
and Lemma \ref{lem_1533} hold true. Set
$$ f(x) = g(x) \cdot 1_{3 K}(x) / \mu(3 K) \qquad \qquad (x \in T = 3 K), $$ an even, continuous probability
density supported on $T$. Note that $f \leq 2 g$, according  to Lemma \ref{lem_1527}. From Lemma \ref{lem_1533}, for any $\xi \in S^{n-1}$ and $t \geq 0$,
$$  \int_{T \cap (\xi^{\perp} + t \xi)} f(y) dy \leq 2 \int_{\xi^{\perp} + t \xi} g(y) dy \leq \frac{C}{n^{(2 - \alpha)/4}}
\leq \frac{C}{n^{(2 - \alpha)/4}} \cdot |T|^{-1/n}, $$
where the last passage is the content of Lemma \ref{cor_gluskin2}. This completes the proof of (i).

By Lemma \ref{lem_529}, the probability measure $\mu$ admits $\psi_{\alpha}$-tails with parameters $(C, c)$.
Write $\eta$ for the probability measure whose density is $f$.
According to Lemma \ref{lem_1527} and the Cauchy-Schwartz inequality, for any $\xi \in S^{n-1}$,
\begin{align} \nonumber
 \int_{\RR^n} |\langle x, \xi \rangle| d \eta(x) & \geq \int_{\RR^n} |\langle x, \xi \rangle| d \mu(x) - \int_{\RR^n \setminus T} |\langle x, \xi \rangle| d \mu(x)
\\  \nonumber & \geq \int_{\RR^n} |\langle x, \xi \rangle| d \mu(x) - \sqrt{\mu(\RR^n \setminus T)} \sqrt{\int_{\RR^n \setminus T} |\langle x, \xi \rangle|^2 d \mu(x) }
\\ & \geq (1 - C e^{-c n}) \int_{\RR^n} |\langle x, \xi \rangle| d \mu(x),  \label{eq_606}
\end{align}
where we used (\ref{eq_605}) in the last passage. Since $\eta \leq 2 \mu$, it thus follows from (\ref{eq_606}) that
also $\eta$ has $\psi_{\alpha}$-tails with parameters $(\tilde{C}, \tilde{c})$. This completes the proof of (ii).
\end{proof}

\medskip \noindent
{\Large \bf Appendix}

In this appendix we indicate how Bourgain's argument may be modified in order
to prove (\ref{eq_1009}). We may normalize and assume that $\mu$ is a probability measure. The first observation is that
denoting $M = \sup_{H\subseteq \RR^n} \mu^+(K \cap H)$, we have that for all $\theta \in S^{n-1}$,
\begin{align} \label{eq_425} \int_{\RR^n} |\langle x, \theta \rangle|^2 d \mu(x)  &= \int_{-\infty}^{\infty} t^2 \rho_{\theta}(t) dt = 4 \int_0^{\infty} t \left( \int_t^{\infty} \rho_{\theta}(s) ds \right) dt \\ & \geq 4 \int_0^{1/(2M)} t \left( \frac{1}{2} - \int_0^t \rho_{\theta} \right) dt \geq 4 \int_0^{1/(2M)} t \left( \frac{1}{2} - t M \right) dt = \frac{1}{12 M^2},
\nonumber \end{align}
where $\rho_{\theta}(t) = \mu^+(K \cap H_{\theta, t})$ for $H_{\theta, t} = \{ x \in \RR^n \, ; \, \langle x, \theta \rangle = t \}$.
We will use inequality (\ref{eq_425}) in order to replace the hyperplane sections $\mu^+(K \cap H)$ in (\ref{eq_1009}) by second moments of the probability measure $\mu$.
Thus, in order to prove (\ref{eq_1009}), it suffices to show that
\begin{equation}  \inf_{\theta \in S^{n-1}}
\sqrt{\int_{\RR^n} |\langle x, \theta \rangle|^2 d \mu(x)} \leq C(\beta,\gamma) \cdot n^{(2 - \alpha)/4} \cdot \log n  \cdot |K|^{1/n}.
\label{eq_422} \end{equation}
The next step is to reduce matters to the case where $\mu$ is {\it isotropic}, in the sense that the integral
$\int_{\RR^n} |\langle x, \theta \rangle|^2 d \mu(x)$ does not depend on $\theta \in S^{n-1}$. Indeed,
there exists a volume-preserving linear transformation $T$ such that the push-forward $T_* \mu$ is isotropic
(see, e.g., \cite[Section 2.3]{BGVV}). The replacement of $\mu$ by $T_* \mu$ and of $K$ by $T(K)$ does not alter the right-hand side of (\ref{eq_422}), while it does not decrease the left-hand side. Hence from now on we assume that $L_\mu^2 :=
\int_{\RR^n} |\langle x, \theta \rangle|^2 d \mu(x)$ does not depend on $\theta \in S^{n-1}$. In particular,
$$ \int_{\RR^n} |x|^2 d \mu(x) = \sum_{i=1}^n \int_{\RR^n} |\langle x, e_i \rangle|^2 d \mu(x) = n L_{\mu}^2. $$
From the Markov-Chebyshev inequality it thus follows that $\mu( 2 \sqrt{n} L_{\mu} B^n ) \geq 1/2$. As in the proof of Proposition 3.3.3 in \cite{BGVV}, we may now use the $\psi_{\alpha}$-condition in order to conclude that
\begin{equation}  \int_{\RR^n} |\langle x, \theta \rangle|^2 d \mu(x) \leq \tilde{C} \int_{2 \sqrt{n} L_{\mu} B^n} |\langle x, \theta \rangle|^2 d \mu(x) \qquad \text{for all} \ \theta \in S^{n-1}, \label{eq_526}
\end{equation}
where $\tilde{C}$, like all constants in this Appendix, depends solely on $\beta$ and $\gamma$. The next step
is ``reduction to small diameter'', which means that in proving (\ref{eq_422}) we would like to reduce matters
to the case where $\mu$ is isotropic with
\begin{equation} \mu( C \sqrt{n} L_{\mu} B^n ) = 1, \label{eq_527} \end{equation}
for some constant $C$.
The argument for this reduction in \cite[Section 3.3.1]{BGVV}, which involves conditioning $\mu$ to the ball $2 \sqrt{n} L_{\mu} B^n$, applies almost verbatim thanks to (\ref{eq_526}). We may thus assume that (\ref{eq_527}) holds true,
or equivalently that $|\langle x, \theta \rangle| \leq C L_{\mu} \sqrt{n}$
for all $\theta \in S^{n-1}$ and all $x \in \RR^n$ in the support of the measure $\mu$. Therefore,
\begin{equation}  \int_{\RR^n} \exp \left \{ \left( \frac{\langle x, \theta \rangle}{\bar{C} L_{\mu} n^{(2 - \alpha)/4}} \right)^2 \right \}
d \mu(x) \leq \int_{\RR^n} \exp \left \{ \frac{C^{2-\alpha}}{\bar{C}^2} \cdot \frac{|\langle x, \theta \rangle|^{\alpha} }{L_{\mu}^\alpha }  \right \}
d \mu(x) \leq \tilde{C}, \label{eq_1154} \end{equation}
where the last inequality follows from the $\psi_{\alpha}$-condition and a suitable choice of the constant $\bar{C}$  (see \cite[Section 2.4]{BGVV} for standard
computations related to the $\psi_{\alpha}$-condition). Inequality (\ref{eq_1154}) implies  that  for any $\theta \in S^{n-1}$,
\begin{equation}
 \left \|  \langle \cdot, \theta \rangle \right \|_{\psi_2} \leq C \cdot L_{\mu} \cdot n^{(2 - \alpha) / 4}.
 \label{eq_536} \end{equation}
Once we proved the $\psi_2$-estimate in (\ref{eq_536}), we may proceed
as in the proof of Theorem 3.3.5 in \cite{BGVV}, and use Talagrand's comparison theorem, the $\ell$-position of Figiel and Tomczak-Jaegermann, and Pisier's estimate for the Rademacher projection. This establishes the desired inequality (\ref{eq_422}) in the isotropic case.

{
}

\bigskip

\noindent \textsc{Bo'az \ Klartag}: Department of Mathematics, Weizmann Institute of Science, Rehovot 76100 Israel, and
School of Mathematical Sciences, Tel Aviv University, Tel Aviv 69978.

\smallskip

\noindent \textit{E-mail:} \texttt{boaz.klartag@weizmann.ac.il}\bigskip

\noindent \textsc{Alexander \ Koldobsky}: Department of
Mathematics, University of Missouri, Columbia, MO 65211.

\smallskip

\noindent \textit{E-mail:} \texttt{koldobskiya@missouri.edu}

\end{document}